\documentclass[a4paper, 12pt]{article} 

\usepackage[utf8]{inputenc}
\usepackage[T1]{fontenc}
\usepackage[a4paper, margin=2.75cm]{geometry}
\usepackage{needspace, mathscinet}

\usepackage{libertine} 
\usepackage{inconsolata} 

\usepackage{authblk, amsmath, amsthm, amssymb, enumerate, graphicx, mathtools}
\usepackage[textsize=small, textwidth=2cm, color=yellow]{todonotes}
\usepackage{thmtools, thm-restate}
\usepackage[colorlinks=true, citecolor=blue, linkcolor=blue, urlcolor=blue]{hyperref}

\usetikzlibrary{decorations.markings,arrows.meta}

\tikzset{
  midarrow/.style={
    postaction={decorate},
    decoration={markings, mark=at position .53 with {\arrow{Stealth}}}
  },
  midarrowRed/.style={
    postaction={decorate},
    decoration={markings, mark=at position .53 with {\arrow[draw=red]{Stealth}}}
  },
  midArrow/.style={
    postaction={decorate},
    decoration={markings, mark=at position .63 with {\arrow{Stealth}}}
  },
  midArrowRed/.style={
    postaction={decorate},
    decoration={markings, mark=at position .63 with {\arrow[draw=red]{Stealth}}}
  },
}

\declaretheorem[name=Theorem, numberwithin=section]{theorem}
\declaretheorem[name=Lemma, sibling=theorem]{lemma}
\declaretheorem[name=Proposition, sibling=theorem]{proposition}

\declaretheorem[name=Corollary, sibling=theorem]{corollary}

\declaretheorem[name=Claim, sibling=theorem]{claim}

\def\cqedsymbol{\ifmmode$\lrcorner$\else{\unskip\nobreak\hfil
\penalty50\hskip1em\null\nobreak\hfil$\lrcorner$
\parfillskip=0pt\finalhyphendemerits=0\endgraf}\fi}

\let\le\leqslant

\let\leq\leqslant
\let\geq\geqslant

\let\olditemize\itemize \renewcommand{\itemize}{\olditemize\itemsep0pt}

\let\OLDthebibliography\thebibliography
\renewcommand\thebibliography[1]{
  \OLDthebibliography{#1}
  \setlength{\parskip}{0pt}
  \setlength{\itemsep}{0pt plus 0.3ex}
}%
\AtBeginDocument{
   \def\MR#1{}
}%

\definecolor{CornflowerBlue}{rgb}{0.39, 0.58, 0.93}
\definecolor{Magenta}{rgb}{0.50, 0.0, 0.50}
\definecolor{AppleGreen}{rgb}{0.55, 0.71, 0.0}
\definecolor{AO}{rgb}{0.0, 0.5, 0.0}

\title{Graphs whose Eulerian trails have unique labels}
\author{Donggyu Kim}
\affil[1,3]{School of Mathematics, Georgia Institute of Technology.}
\author{Rose McCarty\thanks{Supported by the National Science Foundation under Grant No. DMS-2452111.}}
\affil{School of Mathematics and School of Computer Science, Georgia Institute of Technology.}
\author{Caleb McFarland\thanks{Supported in part by the Georgia Tech ARC-ACO Fellowship and in part by the National Science Foundation under Grant No.~DMS-2452111.}}

\begin{document}
\maketitle

\begin{abstract}
    Consider an undirected graph whose edges are labeled invertibly in a group. When does every Eulerian trail from one fixed vertex to another have the same label? We give a precise structural answer to this question. Essentially, we show that each ``$3$-connected part'' is labeled over a group which is isomorphic to $\mathbb{Z}_2^k$ for some $k$. We also show that the algorithmic problem admits a polynomial-time reduction to the word problem for the group. 
\end{abstract}

\section{Introduction}
Group-labeled graphs (also known as gain graphs or voltage graphs) provide a flexible framework for encoding various constraints on paths, cycles, and trails in graphs while allowing a natural notion of equivalence via shifting at vertices. The most common setting is when the group is $\mathbb{Z}_2$, which corresponds to parity constraints. Cayley graphs are also naturally group-labeled, and they fit into a more general framework relating cover graphs~\cite{GrossTucker77}, graph embeddings~\cite{Gross74}, and graph automorphisms~\cite{CayleyVoltageDegreeDiam}; see~\cite{GrossTucker01}. 

Group-labeled graphs also arise from coextensions of graphic matroids~\cite{GerardsGraphicMatroids, ZaslavskyMatroids1} and analogously, from isotropic systems which are ``almost graphic''~\cite{graphicIsoSystems, BouchetCircleChar}. Thus they play an important role in developing structure theory for matroid minors and vertex-minors; see~\cite{GGW14, McCartyThesis}. On a related note, the Tait graphs of knots can be viewed as signed $4$-regular planar graphs, and they can be defined on other surfaces as well; we refer the reader to~\cite{BouchetMap, EllisMonaghanMoffattTwistedDuality, Moffat12}.



In group-labeled graphs, the graph is undirected, but the two possible orientations of each edge are labeled by inverse elements in a group. Thus every edge can be traversed in either direction, but the direction affects the corresponding group element. An Eulerian trail of the graph can be viewed as traversing the edges in some particular order and orientation; thus it corresponds to a word over the group. See Figure~\ref{fig:introExample} for an example.

\begin{figure}
\centering
\begin{tikzpicture}
    \begin{scope}
        \coordinate (a) at (0,0);
        \coordinate (b) at (2.4,0);
        \coordinate (c) at (-0.4,1.5);
        \coordinate (d) at (-0.4,-1.5);

        \draw[line width=1pt, midarrow] 
            (a) --node[below] {\small $\vec{e}_1$} (b);
        \draw[line width=1pt, midarrow] 
            (c) to[bend left=30] node[below] {\small $\vec{e}_2$} (b);
        \draw[line width=1pt, midarrow] 
            (d) to[bend right=30] node[below, yshift=-0.4mm] {\small $\vec{e}_8$} (b);
        \draw[line width=1pt, midArrow] 
            (a) to[bend left=30] node[left] {\small $\vec{e}_4$} (c);
        \draw[line width=1pt, midArrow] 
            (c) to[bend left=30] node[right, xshift=0.3mm] {\small $\vec{e}_3$} (a);
        \draw[line width=1pt, midArrow] 
            (a) to[bend left=30] node[right] {\small $\vec{e}_7$} (d);
        \draw[line width=1pt, midArrow] 
            (d) to[bend left=30] node[left, xshift=0.3mm] {\small $\vec{e}_6$} (a);
        \draw[line width=1pt, midarrow] 
            (d) to[bend left=80] node[left] {\small $\vec{e}_5$} (c);

        \node at (a) [left] {\small $a$};
        \node at (b) [right] {\small $b$};

        \fill[fill=black] (a) circle (0.07);
        \fill[fill=black] (b) circle (0.07);
        \fill[fill=black] (c) circle (0.07);
        \fill[fill=black] (d) circle (0.07);
    \end{scope}
\end{tikzpicture}
\caption{The graph illustrating the Seven Bridges in K\"{o}nigsberg with one additional edge $e_5$ added. It has an Eulerian trail $\vec{e}_1 \vec{e}_2^{-1} \vec{e}_3 \vec{e}_4 \vec{e}_5^{-1} \vec{e}_6 \vec{e}_7 \vec{e}_8$ from $a$ to $b$. When we label the arcs $\vec{e}_i$ with elements $\alpha_i$ in a group, the label of this trail is $\alpha_1 \alpha_2^{-1} \alpha_3 \alpha_4 \alpha_5^{-1} \alpha_6 \alpha_7 \alpha_8$.}
\label{fig:introExample}
\end{figure}
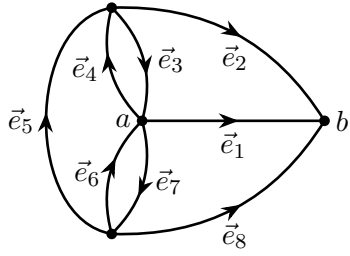

We give a structure theorem for when every Eulerian trail from one fixed vertex to another has the same label. One sufficient condition is that the group is isomorphic to $\mathbb{Z}_2^k$ for some $k \in \mathbb{N}$. Then every Eulerian trail corresponds to the same word, which is just the sum of the labels of the edges; the order does not matter since the group is abelian, and the orientation does not matter since every group element is its own inverse. We prove that this outcome is essentially the only possibility, up to shifting and some connectivity issues arising from $2$-edge-cuts. See Theorem~\ref{thm:fullTechnical} for the formal statement.

\begin{theorem}[Informal Statement]
\label{thm:informal}
The following are equivalent for any group-labeled graph $(G, \gamma)$ with vertices $a$ and $b$ such that there exists an Eulerian trail from $a$ to $b$.\begin{enumerate}
\item Every Eulerian trail from $a$ to $b$ has the same label.
\item There exists a shifting $\gamma'$ of $\gamma$ such that ``each $3$-edge-connected part of $G$'' is labeled over a group which is isomorphic to $\mathbb{Z}_2^k$ for some~$k$. 
\end{enumerate}
\end{theorem}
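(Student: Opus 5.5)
The easy direction is (2)$\Rightarrow$(1), and I would do it first. Shifting at a vertex $v$ by $\delta$ changes the label of every trail from $a$ to $b$ in a controlled way: only the prefix at $a$ and the suffix at $b$ are affected, since every interior visit to $v$ contributes $\delta^{-1}\delta$. So I may assume $\gamma=\gamma'$. Next I decompose along $2$-edge-cuts. If $\{e,f\}$ is a $2$-edge-cut separating $G$ into sides $X$ and $Y$, then every Eulerian trail from $a$ to $b$ enters and leaves the side not containing both endpoints exactly once, or it crosses in a fixed pattern. Hence the trail label factors as a product of the labels of Eulerian trails of the two smaller ``parts''. In each part the edges are obtained by contracting the other side to a single edge, and that edge is labeled by the product. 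Within a $3$-edge-connected part labeled in a subgroup isomorphic to $\mathbb{Z}_2^k$, every Eulerian trail has label equal to the sum of the edge labels. Induction on the number of $2$-edge-cuts then gives (1).

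For (1)$\Rightarrow$(2), I would argue by reduction to the $3$-edge-connected case via the same decomposition. The first step is to show that uniqueness for $G$ implies uniqueness for each part: any Eulerian trail of a part extends, by splicing in a fixed trail through the other side, to an Eulerian trail of $G$, and cancellation in the group transfers the equality back. So assume $G$ is $3$-edge-connected, apart from degenerate small cases that I would handle directly. I would then produce local moves. At a vertex $v$ visited at least twice by a trail $T$, I can reverse the closed subtrail between two visits, which replaces a factor $w$ by $w^{-1}$ with the orientation-induced labels. I can also swap two consecutive closed subtrails at $v$, which replaces $uw$ by $wu$. Both moves fix the endpoints, so (1) forces $xwy=xw^{-1}y$ and $xuwy=xwuy$. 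This gives $w^2=1$ and $uw=wu$ for the labels of all closed trails arising this way, and in particular for all closed walks obtainable as such segments.

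The main obstacle is showing that these segments are rich enough. I need every cycle of $G$ (after a shifting that makes a spanning tree identity-labeled) to occur as such a closed segment, or to be generated by such segments, in some Eulerian trail. Otherwise the group generated by the labels need not be elementary abelian. Here $3$-edge-connectivity is essential. Given a cycle $C$ through a vertex $v$, I want an Eulerian trail that traverses $C$ as a closed subtrail at $v$. Equivalently, $G-E(C)$, with the parity adjustments for $a$ and $b$, must admit a trail decomposition compatible with $C$. I would first try showing that $G-E(C)$ stays connected enough, using a splitting-off or Lovász/Mader-type argument. Failing that, I would use the commutation and involution relations for fundamental cycles relative to a suitable spanning tree, together with an ear decomposition. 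Once all tree-based cycle labels are shown to be commuting involutions, the labels after shifting lie in a group isomorphic to $\mathbb{Z}_2^k$, which gives (2).
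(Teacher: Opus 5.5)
You have not proved the hard direction, (1)$\Rightarrow$(2), in the $3$-edge-connected case, which is where all of the theorem's content lies.

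\textbf{The reduction.} Your outer reduction matches the paper's: Section~\ref{sec:general} peels off a leaf $3$-core and transfers uniqueness between $G$ and its parts. Your description of the parts needs repair, though. The virtual-edge labels must be defined from a fixed Eulerian trail $C$. A side containing both $a$ and $b$ cannot be contracted to an edge, because the trail meets it in two pieces. Instead, the opposite part simply gets new ends $a',b'$, and the parity argument of Lemma~\ref{lem:isValid} keeps it $3$-edge-connected.

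\textbf{Why your local moves fall short.} They only show that every \emph{feasible} circuit (a subcircuit of some Eulerian $a$--$b$ trail) has label of order at most $2$. The swap move adds nothing: if $U$ and $W$ are consecutive, then $UW$ is itself feasible, so $u^2=w^2=(uw)^2=1$ already gives $uw=wu$.

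\textbf{Why neither proposed route closes the gap.} Your first route is false as stated, since even in a $4$-edge-connected graph not every cycle is feasible. For example, join two disjoint copies of $K_5$ by a $4$-cycle alternating between them. Deleting this cycle leaves two non-trivial components, so it lies in no Eulerian circuit. Your fallback also fails. Knowing that a non-feasible fundamental cycle's label is a product of involutions says nothing in a non-abelian group unless those involutions commute. You only have commutation for pairs whose concatenation is feasible. Figure~\ref{fig:counterexample2edgeConn} shows that a correct argument must use $3$-edge-connectivity globally, and nothing in your sketch does.

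\textbf{The missing idea.} The paper supplies it in Proposition~\ref{prop:3edgeConn}, by induction on $|E(G)|$ with a strengthened conclusion: for every edge $f$, the labels of feasible circuits through $\vec f$ already generate $\langle G,\gamma'\rangle$. The argument runs as follows.
\begin{enumerate}
\item Lemma~\ref{lem:splittingMinCut}, a submodularity argument on $3$-edge-cuts, gives an edge $e=aa'$ whose deletion (smoothing $a$ if needed) leaves a valid instance $(G_e,a',b)$.
\item By induction, after a suitable shifting, $\langle G-e,\gamma'\rangle=\Gamma_e\cong\mathbb{Z}_2^k$. Every feasible circuit beginning with $\vec e$ then has label $\beta\alpha$ with $\beta=\gamma'(\vec e)$ and $\alpha\in\Gamma_e$.
\item If all these labels are involutions, then $(\beta\alpha)(\beta\alpha')=\alpha\alpha'$, so they commute pairwise.
\item They generate all of $\langle G,\gamma'\rangle$. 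To see this, apply the strengthened hypothesis in $G_e$ to an edge at $a$ (or to the edge created by smoothing $a$), then split the resulting circuits at $a$.
\item The strengthened hypothesis is restored for every other edge $f$ by taking an edge-maximal circuit through $e$ and $f$, which is feasible.
\end{enumerate}
This generation-plus-commutation mechanism is what your proposal lacks.
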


We give a simple proof of Theorem~\ref{thm:informal} for abelian groups in Lemma~\ref{lem:abelian}. In this case, we find that $G$ itself is labeled over a group which is isomorphic to $\mathbb{Z}_2^k$ for some~$k$. However, we observe that this does not necessarily hold for general groups; see Figure~\ref{fig:counterexample2edgeConn} for an example.

We also use Theorem~\ref{thm:informal} to efficiently solve the corresponding algorithmic problem. We assume that our graph is labeled over a finitely generated group $\Gamma$ and we have access to an oracle which solves the \emph{word problem} over $\Gamma$. That is, the oracle takes as input a word over the generators of $\Gamma$ and their inverses (viewed as formal symbols) and decides whether that word is equal to the identity element. This is famously an undecidable problem for general finitely generated groups \cite{BooneWordProblem,NovikovWordProblem}. However many groups do admit such oracles; see~\cite{BooneHigman1974} for a characterization of such groups and~\cite{wordProblemBook} for connections to the geometry of the group. 

We assume that the oracle runs in time $\phi(n)$ where $n$ is the length of the input word. Note that for all finite groups and many other interesting groups (such as word-hyperbolic groups~\cite{complexityOfDehns, hyperbolicGroupsConjugacy}), we have $\phi(n) = \mathcal{O}(n)$. We also refer the reader to~\cite{mappingClassGroup, 3ManifoldGroups} for some recent examples where the word problem can be solved in nearly linear time, and to~\cite{GrowthOfGroups} for a complexity class which captures virtually nilpotent groups.

With this setup, we can state our main algorithmic result as follows.

\begin{theorem}
\label{thm:algorithmicIntro}
    Let $(G, \gamma)$ be a connected group-labeled graph with $m$ edges so that every arc is labeled by a word of length~$1$. Then we can decide in time $\mathcal{O}(m^2)\cdot \phi(12m)$ whether every Eulerian circuit from a vertex $a$ to a vertex $b$ has the same label. Moreover, if not, then we can find two Eulerian circuits from $a$ to $b$ with different labels in time $\mathcal{O}(m^4)\cdot \phi(12m)$.
\end{theorem}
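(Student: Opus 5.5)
The algorithm rests on the structural characterization, Theorem~\ref{thm:informal} (formally Theorem~\ref{thm:fullTechnical}). Rather than comparing all Eulerian trails, we test the structural condition, which reduces to polynomially many word-problem queries. First, in linear time we check that an Eulerian trail from $a$ to $b$ exists: $G$ is connected and the odd-degree vertices are exactly $\{a,b\}$, or none if $a=b$. We then reduce to the circuit case by adding an auxiliary edge $ba$ with a fresh label; this changes neither which trails exist nor whether their labels agree.

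Next we compute the decomposition along $2$-edge-cuts into ``$3$-edge-connected parts.'' This is standard: $2$-edge-cuts can be found in polynomial time, for example $\mathcal{O}(m^2)$ by testing each edge for being a bridge after deleting another edge. For each part we pick a spanning tree $T$ and shift so that every tree arc has identity label, by composing labels along tree paths. Each non-tree arc $e$ then gets a label $\gamma'(e)$, namely the word along the fundamental cycle of $e$, of length at most about $2m$.

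By the structure theorem, the answer is yes exactly when, for a suitable canonical shifting, the labels in each part generate a group isomorphic to $\mathbb{Z}_2^k$. A subgroup generated by a set $S$ is of this form iff $s^2=1$ for all $s\in S$ and $st=ts$ for all $s,t\in S$. So we test $\gamma'(e)^2=1$ and $\gamma'(e)\gamma'(f)\gamma'(e)^{-1}\gamma'(f)^{-1}=1$ for all pairs of non-tree arcs $e,f$ in a common part. That is $\mathcal{O}(m^2)$ queries on words of length at most $4\cdot 3m=12m$, which gives the bound $\mathcal{O}(m^2)\phi(12m)$. The main obstacle is that the structure theorem asks for the existence of some shifting. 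I need to show that the tree-normalized shifting within each part is canonical enough, with the interaction across $2$-edge-cuts handled by how the parts are glued. I would first try to prove that any valid shifting differs from the tree-normalized one by conjugation on each part, which preserves the $\mathbb{Z}_2^k$ property.

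For the witness, the plan is constructive. A failed test names a pair $e,f$ (or a single $e$) violating the condition, and we extract from the proof of Theorem~\ref{thm:fullTechnical} a local exchange. Take an Eulerian circuit from $a$ to $b$ (Hierholzer) and modify it to route through $e$ and $f$ in two different orders or orientations. The proof should show that the two labels differ by a conjugate of the failed relation. Producing such a circuit may require repeatedly rerouting through $3$-edge-connectivity, for instance by finding edge-disjoint paths. This costs $\mathcal{O}(m^2)$ work per candidate configuration, and each candidate's label comparison is one oracle call. The $\mathcal{O}(m^4)\phi(12m)$ bound then comes from trying $\mathcal{O}(m^2)$ candidate modifications, each built in $\mathcal{O}(m^2)$ time with a $\phi(12m)$ check.
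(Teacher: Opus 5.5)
\textbf{The main gap: finding the two trails.} A failed test only says that some $T$-normalized labels $\gamma'(\vec e),\gamma'(\vec f)$ of one part are not commuting involutions. These are labels of fundamental cycles. Such cycles need not be feasible circuits (subcircuits of an Eulerian $a'$--$b'$ trail), and they may run through virtual edges that stand for excursions into other cores. Nothing in the proof of Proposition~\ref{prop:3edgeConn} converts such a relation into two Eulerian trails. That proof is an induction that deletes an edge at $a$ (Lemma~\ref{lem:splittingMinCut}), and the circuit of order greater than $2$ appears at an unspecified stage. So your claim that rerouting through $e$ and $f$ gives labels differing by a conjugate of the failed relation is unsupported. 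The count of $\mathcal{O}(m^2)$ candidates at $\mathcal{O}(m^2)$ each has no argument behind it either.

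The missing idea is self-reduction, which needs no constructive form of the structure theorem:
\begin{enumerate}
\item While some consecutive pair $\vec e\vec f$ can be split off (new label $\gamma(\vec e)\gamma(\vec f)$) and two differently labeled Eulerian $a$--$b$ trails still exist, split it off. Record for each arc the original trail it represents.
\item The Eulerian trails of the new graph are exactly the old ones that traverse $\vec e\vec f$ or its reverse consecutively, with the same labels. So each test is one run of the decision procedure.
\item A rejected pair stays rejected: later arcs whose trails end in $\vec e$ and start with $\vec f$ also cannot be split. Hence there are $\mathcal{O}(m^2)$ runs, each costing $\mathcal{O}(m^2)\,\phi(12m)$, since the total word length stays $m$.
\item By the last sentence of Theorem~\ref{thm:fullTechnical}, at the end at most three edges remain. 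They form $T_1LT_2$ with $L$ a loop and $\gamma(L)\neq\gamma(L)^{-1}$. Expanding $T_1LT_2$ and $T_1L^{-1}T_2$ gives the two trails.
\end{enumerate}

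\textbf{The decision half.} This follows the paper's route: split into parts, take a $T$-normalized shifting in each, and make $\mathcal{O}(m^2)$ tests of $s^2=1$ and $st=ts$ on words of length at most $12m$. Your ``canonical enough'' step is Lemmas~\ref{lem:normalizedShifting1}--\ref{lem:normalizedShifting3}. The precise statement is not that every good shifting is conjugate to the normalized one. Rather, a good shifting can be $T$-normalized using only elements of its own $\mathbb{Z}_2^k$, and any two $T$-normalized shiftings are conjugate. Two steps still need repair.

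First, adding an edge $f$ from $b$ to $a$ with a fresh label $\beta$ changes the answer. Suppose $G$ is a single edge $e$ from $a$ to $b$ labeled by $\alpha$ of order $3$. The answer for $G$ is yes. But in $G+f$ the Eulerian circuits $\vec e\vec f$ and $(\vec e\vec f)^{-1}$ at $a$ have labels $\alpha\beta\neq(\alpha\beta)^{-1}$. Labeling the new arc by $\gamma(C)^{-1}$ for a fixed Eulerian trail $C$ would work. However, no reduction is needed, since Theorem~\ref{thm:fullTechnical} already handles $a\neq b$.

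Second, the parts cannot be left unspecified. They must be the valid instances $(H,a',b')$ of Section~\ref{sec:general}. Each consists of the edges inside a core $X$, plus one virtual edge for each excursion of a fixed Eulerian trail $C$ outside $X$, labeled by that excursion. The terminals $a'$ and $b'$ are the first and last vertices of $X$ on $C$. Induced subgraphs on cores, for instance, fail. Join $a$ and $b$ by three paths $P_1,P_2,P_3$ of length two. Label one arc of $P_2$ by $\alpha$ of order $3$ and all other arcs by $1$. The core $\{a,b\}$ induces no edges, so such a test accepts. Yet $P_1P_2^{-1}P_3$ and $P_2P_1^{-1}P_3$ have labels $\alpha^{-1}$ and $\alpha$.
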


We expect the results in this paper to play an important role in understanding circuit decompositions of Eulerian graphs. For applications to vertex-minors (see~\cite{McCartyThesis, McCartyEulerian}), we are interested in understanding when a $\mathbb{Z}_2^k$-labeled graph has a \emph{circuit decomposition} (i.e., a collection of circuits which partitions the edge-set of the graph) so that all circuits begin and end at a fixed root vertex and at least $k$ of them have non-identity label. Algorithmic aspects are also of particular interest to the quantum computing community. This is because vertex-minors capture when one graph state can be ``easily'' prepared from another; see~\cite{transformingStates, LCEquiv}. McCarty~\cite{McCartyThesis} proved a min-max theorem for $\mathbb{Z}_2$-labeled graphs, but there are substantial difficulties in generalizing this theorem to even $\mathbb{Z}_2^2$. If one wants to understand the case of general groups $\Gamma$, then Theorems~\ref{thm:informal} and~\ref{thm:algorithmicIntro} are required in order to handle $2$-edge-cuts. It seems interesting that the $\mathbb{Z}_2^k$ case arises naturally from Theorem~\ref{thm:informal}.

We leave it as an open problem to understand precisely what labels of Eulerian trails are possible in a group-labeled graphs. In particular, while Theorem~\ref{thm:informal} can be used to understand whether the labels of Eulerian trails from $a$ to $b$ generate a non-trivial subgroup, we do not have any control over what this group is.

\subsection*{Related Work}

There is a long history of studying cycle and circuit decompositions of Eulerian graphs with different constraints, as well as studying constraints in group-labeled graphs. We give a short survey of these areas in order to place our results in context. 

For one, there is a line of work~\cite{DecomposingProjective, DecomposingKleinBottle} culminating with~\cite{DecomosingGraphsOnSurfaces} which shows that the following holds for any Eulerian graph $G$ drawn on a surface. For each homotopy type, there is a cycle decomposition of $G$ so that the sum of the representativities of the cycles (with respect to that homotopy type) equals the representativity of $G$. (Here, the \emph{representativity} is the minimum number of edges a curve of a given homotopy type can intersect the graph/cycle in.) Other work focuses on finding sufficient conditions to guarantee that there exists a cycle decomposition where each cycle has even length~\cite{EvenCircuitDecompCographs, EvenCircuitDecompLineOfCubic, EvenCircuitDecompPlanar, EvenCircuitDecomp} or uses at most one edge from each part in a given partition of the edge-set~\cite{ForbiddenPartsCircuitDecomp}.

M\'{a}\v{c}ajov\'{a} and \v{S}koviera~\cite{MacajovaOddEul} showed that an Eulerian graph has a circuit decomposition containing $k$ circuits of odd length if and only if it has a packing of $k$ pairwise edge-disjoint odd cycles, and the number of edges has the same parity as $k$. McCarty~\cite{McCartyEulerian} proved a precise min-max theorem for the rooted version of this problem (where every circuit in the decomposition is required to hit a fixed vertex).  If one instead wishes to find a {cycle} decomposition with many odd cycles instead of odd circuits, then things becomes much harder, but see~\cite{conjOddCyclesDecomp} for a nice conjecture. This conjecture says that if $r$ and $t$ are feasible numbers of odd cycles, then any integer between $r$ and $t$ with the same parity is also feasible.

There are also Erd\H{o}s-P\'{o}sa properties (i.e., rough dualities between packing and covering) known for edge-disjoint odd cycles in $4$-edge-connected graphs which are not necessarily Eulerian; see~\cite{packingRooted} for the rooted version and~\cite{ErdosPosa4EdgeConn} for the unrooted version. The condition of $4$-edge-connectivity is necessary due to a well-known example called the Escher Wall, which Reed~\cite{ReedEscherWall} showed is the only obstruction. We note that the only problem comes from cuts with exactly three edges, and so Eulerian graphs behave much better.

On the algorithmic side, while we do not know of prior work studying what types of Eulerian trails can exist in group-labeled graphs, there is prior work on the parities of paths and cycles. Kawarabayashi, Reed, and Wollan~\cite{kawarabayashi2011graph} gave an algorithm for the $k$-linkage problem with parity constraints. In his PhD thesis, Huynh~\cite{huynh2009linkage} generalized this to the $k$-linkage problem for group-labeled graphs, for any fixed finite group. Very recently, Liu and Yoo~\cite{liu2025disjoint} gave an algorithm for the $k$-linkage problem where the constraints come from a graph where the {undirected} edges are labeled in a finite abelian group. 

There is also a vast literature about finding vertex-disjoint paths whose labels are not the identity element in a group-labeled graph. We refer the reader to the min-max theorem in~\cite{ChudnovskyAPathsAlg, ChudnovskyAPaths}, the unified Erdős-Pósa theorem in~\cite{huynh2019unified}, and the algorithm for finding the shortest such path joining a pair of vertices in~\cite{IwataYamaguchiShortestNonzeroPath}.

\subsection*{Outline}
In Section~\ref{sec:prelim} we introduce our notation, discuss the abelian case (see Lemma~\ref{lem:abelian}), and prove a helpful lemma about how to maintain $3$-edge-connectivity (Lemma~\ref{lem:splittingMinCut}). In Section~\ref{sec:3ConnFinal} we prove Proposition~\ref{prop:3edgeConn}, which handles the $3$-edge-connected case. In Section~\ref{sec:general} we show how to obtain the ``$3$-edge-connected parts'' and prove the full structure theorem (Theorem~\ref{thm:fullTechnical}). Finally, we use this theorem to obtain the algorithm in Section~\ref{sec:algorithm} (see Theorem~\ref{thm:algorithm-general} for the decision problem and Corollary~\ref{cor:findTrails} for the algorithm which finds the trails if they exist).


\section{Preliminaries}
\label{sec:prelim}

We consider graphs to be finite and undirected, and we allow parallel edges and loops. We denote group action multiplicatively and the identity by~1. We use standard graph theory notation. In particular, if $G$ is a graph with a set of vertices $X$, then we write $\delta_G(X)$ (or simply $\delta(X)$ if the context is clear) for the set of all edges of $G$ with exactly one end in~$X$. We say that a graph is \emph{$3$-edge-connected} if it does not have a proper, non-empty set of vertices $X$ such that $|\delta(X)|\leq 2$. Thus every one vertex graph is trivially $3$-edge-connected.


Given a graph $G$, each edge $e$ has two distinct orientations, called \emph{arcs}, which we denote by $\vec{e}$ and $\vec{e}^{-1}$. Each arc $\vec{e}$ has a \emph{tail} and a \emph{head}, and we say that $\vec{e}$ is oriented \emph{from} its tail \emph{to} its head. A \emph{group-labeled graph} is a tuple $(G, \gamma)$ so that $G$ is a graph and $\gamma$ is a function which labels each arc of $G$ in a group $\Gamma$ such that for each edge $e$, we have $\gamma(\vec{e}^{-1})=\gamma(\vec{e})^{-1}$. So traversing an edge in one direction corresponds to a group element $\alpha$, and traversing it in the other direction corresponds to $\alpha^{-1}$. We call any such function $\gamma$ a \emph{group-labeling} of~$G$. We denote by $\langle G, \gamma\rangle$ the subgroup of $\Gamma$ generated by $\{\gamma(\vec{e}) : e \in E(G)\}$. 

A \emph{trail} of $G$ is a sequence of arcs so that their underlying undirected edges are all distinct, and the head of each arc is the tail of the next. Thus each trail $T=\vec{e}_1\vec{e}_2\dots\vec{e}_t$ yields a word $\gamma(\vec{e}_1)\gamma(\vec{e}_2)\dots\gamma(\vec{e}_t)$ over $\Gamma$. The corresponding group element, denoted $\gamma(T)$, is the \emph{label} of $T$. A trail is \emph{Eulerian} if it includes exactly one orientation of every edge of $G$. A trail is \emph{from} its \emph{tail}, which is the tail of its first arc, \emph{to} its \emph{head}, which is the head of its last arc. We call these vertices the \textit{ends} of $T$. The \emph{inverse} of $T$ is the trail $T^{-1}$ of arcs in the reverse order and reverse orientation; thus $\gamma(T^{-1})=\gamma(T)^{-1}$. A \emph{circuit} is a trail $T$ which only has one end.
A \emph{subcircuit} of a trail $T$ is a circuit that is a subsequence of consecutive arcs of $T$. If $T_1$ and $T_2$ are trails so that the head of $T_1$ is the tail of $T_2$, then we can concatenate them to form a new trail which we denote by~$T_1T_2$.


There is a natural equivalence relation on $\Gamma$-labeled graphs defined as follows. Given a $\Gamma$-labeled graph $(G,\gamma)$, a vertex $v \in V(G)$, and a group element $\alpha \in \Gamma$, \textit{shifting by $\alpha$ at $v$} means to append $\alpha$ at the beginning of the label of each arc with tail $v$ and to append $\alpha^{-1}$ at the end of the label of each arc with head $v$. So for instance, if $\vec{e}$ is a loop at $v$, then its new label is $\alpha\gamma(\vec{e})\alpha^{-1}$. This operation respects orientations of edges and so always results in a new $\Gamma$-labeling. We say that a group-labeling $\gamma'$ of $G$ is a \emph{shifting} of $\gamma$ if it can be obtained by a (finite) sequence of such shiftings. 

The labels of circuits behave well under shifting in the following sense. 

\begin{lemma}
\label{lem:basicProperties}
Let $(G, \gamma)$ be a group-labeled graph.  Then for any circuit $C$ of $G$ and any shifting $\gamma'$ of $\gamma$, the element $\gamma'(C)$ is a conjugate of $\gamma(C)$. In particular, $\gamma'(C)$ and $\gamma(C)$ have the same order in the group.
\end{lemma}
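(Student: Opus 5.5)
Since a shifting is a finite sequence of single shiftings, and conjugacy is transitive, it suffices by induction on the length of the sequence to treat the case where $\gamma'$ is obtained from $\gamma$ by a single shifting by $\alpha$ at a vertex $v$. I will then show directly that $\gamma'(C)$ is either $\gamma(C)$ or $\alpha\gamma(C)\alpha^{-1}$. The statement about orders follows immediately, since conjugation is an automorphism of $\Gamma$ and hence preserves orders.

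Write $C=\vec{e}_1\vec{e}_2\cdots\vec{e}_t$ with common end $w$. For each $i$, the new label $\gamma'(\vec{e}_i)$ equals $\gamma(\vec{e}_i)$, preceded by a factor $\alpha$ if the tail of $\vec{e}_i$ is $v$ and followed by a factor $\alpha^{-1}$ if its head is $v$. A loop at $v$ receives both factors, which matches the convention in the definition.

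Multiplying out $\gamma'(\vec{e}_1)\cdots\gamma'(\vec{e}_t)$, consider each interior vertex of the circuit, namely the head of $\vec{e}_i$, which is the tail of $\vec{e}_{i+1}$ for $1\le i<t$. If that vertex is $v$, the product contains an adjacent pair $\alpha^{-1}\alpha$, which cancels. All interior occurrences therefore cancel, and only the factors at the two ends of the word remain. These are a leading $\alpha$ if the tail of $\vec{e}_1$ is $v$, and a trailing $\alpha^{-1}$ if the head of $\vec{e}_t$ is $v$.

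Since $C$ is a circuit, its tail and head coincide (both equal $w$). So either $w=v$ and $\gamma'(C)=\alpha\gamma(C)\alpha^{-1}$, or $w\neq v$ and $\gamma'(C)=\gamma(C)$. The only point requiring care is bookkeeping for loops and for repeated visits to $v$, and the telescoping cancellation handles both uniformly. I do not expect any genuine obstacle.
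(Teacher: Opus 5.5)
Your proof is correct and uses the same telescoping cancellation at interior vertices as the paper's. The only difference is bookkeeping: the paper first collapses the sequence of shifts into one shift $\alpha_v$ per vertex (using that shifts at distinct vertices commute) and telescopes once to get $\alpha_w\gamma(C)\alpha_w^{-1}$, whereas you telescope for each single shift and compose the resulting conjugations by induction.
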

\begin{proof}
    Let $C= \vec{e}_1 \vec{e}_2 \dots \vec{e}_t$ be a circuit of $G$, and let $\gamma'$ be a shifting of $\gamma$. We write $v_i$ for the head of $\vec{e}_i$ for each $i$. Note that if $u$ and $v$ are different vertices, then the operations of shifting at $u$ and shifting at $v$ commute. Thus the shifting $\gamma'$ is defined by selecting one group element $\alpha_v$ for each vertex $v$ and shifting at $v$ by $\alpha_v$. We write $\alpha_{i}$ for $\alpha_{v_i}$. Then 
    \begin{align*}
    \gamma'(C) = (\alpha_t \gamma(\vec{e}_1) \alpha_1^{-1}) (\alpha_1 \gamma(\vec{e}_2) \alpha_2^{-1}) \dots (\alpha_{t-1} \gamma(\vec{e}_t) \alpha_t^{-1}) = \alpha_t \gamma(C) \alpha_t^{-1},
    \end{align*} which proves the lemma.
\end{proof}

Note that in the same manner, if $(G, \gamma)$ is a group-labeled graph with vertices $a$ and $b$ and $\gamma'$ is a shifting of $\gamma$, then every Eulerian trail from $a$ to $b$ has the same $\gamma$-label if and only if every Eulerian trail from $a$ to $b$ has the same $\gamma'$-label.

There is a well-known way of going from one group-labeled graph $(G, \gamma)$ to another one with fewer edges. For distinct arcs $\vec{e}_1$ and $\vec{e}_2$ such that the head of $\vec{e}_1$ is the tail of $\vec{e}_2$, \emph{splitting off $\vec{e}_1$ and $\vec{e}_2$} means to delete $\vec{e}_1$ and $\vec{e}_2$ and to add a new arc from the tail of $\vec{e}_1$ to the head of $\vec{e}_2$  (which is a loop if the tail of $\vec{e}_1$ and the head of $\vec{e}_2$ are the same vertex) whose label is $\gamma(\vec{e}_1)\gamma(\vec{e}_2)$. See Figure~\ref{fig:splittingOff}.

\begin{figure}
    \centering
    \begin{tikzpicture}
        \begin{scope}
            \coordinate (a) at (0,0);

            \coordinate (a1) at (45:1.7);
            \coordinate (a11) at (35:2.15);
            \coordinate (a12) at (45:2.25);
            \coordinate (a13) at (55:2.15);

            \coordinate (a2) at (135:1.7);
            \coordinate (a21) at (125:2.15);
            \coordinate (a22) at (135:2.25);
            \coordinate (a23) at (145:2.15);
            
            \coordinate (b1) at (180+30:0.6);
            \coordinate (b2) at (180+70:0.6);
            \coordinate (b3) at (180+110:0.6);
            \coordinate (b4) at (180+150:0.6);



            \draw[line width=1pt, midArrow] (a2) --node[left, yshift=-1mm] {\small $\gamma(\vec{e}_1)$} (a);
            \draw[line width=1pt, midArrow] (a) --node[right, yshift=-1mm] {\small $\gamma(\vec{e}_2)$} (a1);

            \draw[line width=1pt]
                (a1)--(a11)
                (a1)--(a12)
                (a1)--(a13)
                (a2)--(a21)
                (a2)--(a22)
                (a2)--(a23)
                (a)--(b1)
                (a)--(b2)
                (a)--(b3)
                (a)--(b4)
                ;

            \fill[fill=black] (a) circle (0.07);
            \fill[fill=black] (a1) circle (0.07);
            \fill[fill=black] (a2) circle (0.07);

            %
        \end{scope}
        \begin{scope}[xshift=3.7cm, yshift=0.35cm]
            \draw[-stealth, line width=2pt]
                (-1.25,0) -- node[above] {split off} node[below] {$\vec{e}_1$ and $\vec{e}_2$} (1.25,0);
        \end{scope}
        \begin{scope}[xshift=7.4cm]
            \coordinate (a) at (0,0);

            \coordinate (a1) at (45:1.7);
            \coordinate (a11) at (35:2.15);
            \coordinate (a12) at (45:2.25);
            \coordinate (a13) at (55:2.15);

            \coordinate (a2) at (135:1.7);
            \coordinate (a21) at (125:2.15);
            \coordinate (a22) at (135:2.25);
            \coordinate (a23) at (145:2.15);
            
            \coordinate (b1) at (180+30:0.6);
            \coordinate (b2) at (180+70:0.6);
            \coordinate (b3) at (180+110:0.6);
            \coordinate (b4) at (180+150:0.6);



            \draw[line width=1pt, midarrow] (a2) to[bend right=30] node[above, yshift=0.5mm] {\small $\gamma(\vec{e}_1)\gamma(\vec{e}_2)$} (a1);

            \draw[line width=1pt]
                (a1)--(a11)
                (a1)--(a12)
                (a1)--(a13)
                (a2)--(a21)
                (a2)--(a22)
                (a2)--(a23)
                (a)--(b1)
                (a)--(b2)
                (a)--(b3)
                (a)--(b4)
                ;

            \fill[fill=black] (a) circle (0.07);
            \fill[fill=black] (a1) circle (0.07);
            \fill[fill=black] (a2) circle (0.07);

            %
        \end{scope}
    \end{tikzpicture}
    \caption{Illustration of splitting off $\vec{e}_1$ and $\vec{e}_2$.}
    \label{fig:splittingOff}
\end{figure}
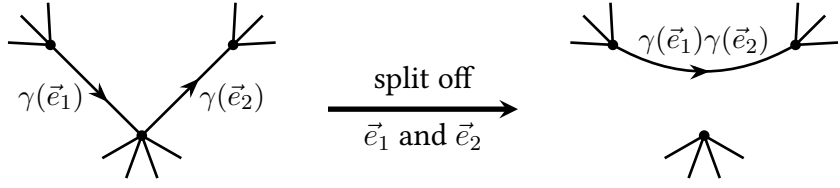

\subsection*{Abelian groups}
For abelian groups the situation is much easier, and we include a direct proof now. Then we outline how things are different for the general, non-abelian case.

\begin{lemma}
\label{lem:abelian}
Let $G$ be a graph with vertices $a$ and $b$ such that there exists an Eulerian trail from $a$ to $b$, and let $\gamma$ be a labeling of $G$ over an abelian group. Then the following are equivalent.\begin{enumerate}
    \item Every Eulerian trail of $G$ from $a$ to $b$ has the same $\gamma$-label.
    \item There exists a shifting $\gamma'$ of $\gamma$ so that $\langle G, \gamma' \rangle$ is isomorphic to $\mathbb{Z}_2^k$ for some $k$.
\end{enumerate}
\end{lemma}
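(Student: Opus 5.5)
For (2)$\Rightarrow$(1), I would use that in an abelian group $\Gamma'\cong\mathbb{Z}_2^k$ every element is its own inverse. So under $\gamma'$ the label of any Eulerian trail is $\prod_{e\in E(G)}\gamma'(\vec{e})$, which depends neither on the order nor on the orientations of the arcs. Hence all Eulerian trails from $a$ to $b$ have the same $\gamma'$-label. By the remark after Lemma~\ref{lem:basicProperties}, this transfers back to $\gamma$.

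For (1)$\Rightarrow$(2), I would first normalize by shifting. Since there is an Eulerian trail, $G$ is connected apart from isolated vertices, so I can fix a spanning tree $F$ of the non-trivial component. Shifting at vertices (root $F$ at $a$ and shift successively along tree edges) gives $\gamma'(\vec{e})=1$ for every $e\in F$. In an abelian group every Eulerian trail then has $\gamma'$-label $\prod_{e}\gamma'(\vec{e})^{\epsilon_e}$, where $\epsilon_e=\pm1$ records the direction in which the trail traverses $e$. Tree edges contribute nothing, so $\langle G,\gamma'\rangle$ is generated by the labels of the non-tree edges. It therefore suffices to show $\gamma'(\vec{e})^2=1$ for every non-tree edge $e$; an abelian group generated by involutions is an elementary abelian $2$-group, i.e.\ $\mathbb{Z}_2^k$.

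The key step is to show that for each edge $e$ there are two Eulerian trails from $a$ to $b$ that traverse $e$ in opposite directions and agree in direction on every other edge. Their labels then differ by the factor $\gamma'(\vec{e})^{2}$, so (1) forces $\gamma'(\vec{e})^2=1$. (For tree edges this is automatic.) Given an Eulerian trail $T$, let $e$ be a non-loop edge with ends $u,v$. If $T$ passes through $e$ from $u$ to $v$, I would look for a subtrail of $T$ of the form $\vec{e}\,P$ with $P$ returning to $u$, or $Q\vec{e}$ with $Q$ starting at $v$. Reversing that closed subcircuit $\vec{e}P$ gives another Eulerian trail from $a$ to $b$ in which exactly the edges of the subcircuit flip direction. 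That flips more than just $e$, but reversal of a circuit $C$ changes the label by $\gamma'(C)^{-2}$, so (1) gives $\gamma'(C)^2=1$ for every circuit $C$ that appears as a subcircuit of some Eulerian trail. I would then argue that the set of circuits arising this way generates the cycle space, so that every fundamental cycle $C_e$ of $F$ (whose label is $\gamma'(\vec{e})^{\pm1}$) satisfies $\gamma'(C_e)^2=1$. This handles loops too, since a loop is itself a subcircuit of any Eulerian trail.

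The main obstacle is this last point: showing that every fundamental cycle's label squared is trivial, using only circuits that occur as subcircuits of Eulerian trails from $a$ to $b$. Since the group is abelian and circuit labels are additive over the cycle space, it is enough to express each cycle as a $\mathbb{Z}$-combination of such subcircuits. My first attempt would be induction on $|E(G)|$ via splitting off. Take two consecutive arcs of an Eulerian trail at a vertex and split them off. This preserves the existence of Eulerian trails from $a$ to $b$, and trails of the new graph lift to trails of $G$, so hypothesis (1) is inherited. By induction the new graph has a shifting with all labels of order dividing $2$. Combining this with a direct handling of the subcircuit through the split-off pair should give the claim for $G$.
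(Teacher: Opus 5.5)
Your reduction is sound, and your route differs from the paper's, but its decisive step is not yet proved. The implication $(2)\Rightarrow(1)$ is fine. After $F$-normalizing, it does suffice to show $\gamma'(C)^2=1$ for every circuit $C$. By additivity over $Z_1(G;\mathbb{Z})$, this follows once the feasible circuits (subcircuits of Eulerian trails from $a$ to $b$, whose labels square to $1$ by reversal) generate $Z_1(G;\mathbb{Z})$.

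Two remarks. First, drop the ``key step'' as you first state it. For a non-loop edge $e$ it is never possible: the signed edge-vectors of two trails from $a$ to $b$ differ by a cycle, and $\pm 2\mathbf{1}_e$ is not a cycle. Second, and more importantly, the generation claim is where all the content of the lemma lies, and your splitting-off sketch does not say what must be added at the inductive step. ``The subcircuit through the split-off pair'' is the wrong object, because circuits using $\vec e\vec f$ consecutively already come from $G'$ and contribute nothing new.

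Here is what closes it. Let $\vec e$ enter $v$ and $\vec f$ leave $v$. The cycles of $G'$ correspond, with the same labels, to $\{c\in Z_1(G;\mathbb{Z}) : c_e=c_f\}$, which is the kernel of $c\mapsto c_e-c_f$. Label-preservation uses commutativity, and shifting does not change circuit labels in an abelian group, so the induction hypothesis makes all of these labels square to $1$.

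So you only need one feasible circuit $L$ of $G$ containing exactly one of $e,f$. Then every cycle of $G$ is an integer multiple of $L$ plus a cycle coming from $G'$, and $\gamma(L)^2=1$ by reversing $L$ inside $T$. Such an $L$ exists:
\begin{itemize}
\item If $T$ visits $v$ again, take the segment of $T$ between the visit $\vec e\vec f$ and that other visit.
\item Otherwise, if $v=a$, take the initial segment of $T$ ending with $\vec e$; if $v=b$, take the final segment beginning with $\vec f$.
\item In the remaining case $v$ has degree $2$ and no loop. Conservation at $v$ then forces $c_e=c_f$ for every cycle, so nothing needs to be added.
\end{itemize}
With the base case $|E(G)|\le 1$, the induction goes through.

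The paper does not induct on $|E(G)|$. It argues by contradiction, choosing a circuit $C$ with label of order greater than $2$ that first maximizes the number of edges in the component of $G-E(C)$ containing $a$, and then maximizes $|E(C)|$. Either $C$ can be spliced into an Eulerian trail from $a$ to $b$, or some other component $H$ of $G-E(C)$ has an Eulerian circuit $C_H$. In the second case, $C=(C_HC)-C_H$ in the cycle space, with both $C_H$ and $C_HC$ strictly better in the ordering. This is in effect a well-founded induction showing that every circuit is an integer combination of feasible circuits, and commutativity is used at exactly the same point as in your argument.

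Your route makes the label-free statement explicit (feasible circuits generate $Z_1(G;\mathbb{Z})$) and proves it by a local reduction. That reduction is similar in spirit to the one in Proposition~\ref{prop:3edgeConn}, but costs some cycle-space bookkeeping. The paper's extremal choice is shorter and avoids that bookkeeping.
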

\begin{proof}
    It is clear that the second condition implies the first one, so we just prove the other direction. Suppose towards a contradiction that it is false.

    First of all, suppose that $G$ has no circuit whose label has order more than $2$. Let $T$ be a spanning tree of $G$, and shift so that every edge of $T$ is labeled by the identity. Shifting does not change the order of the label of any circuit (in fact, in abelian groups, it does not change the label at all). Thus, with respect to this shifting $\gamma'$, every edge $e$ has $\gamma'(\vec{e})$ of order at most $2$. (This can be seen by considering the unique cycle of $T+e$.) So $\langle G, \gamma' \rangle$ is a finite abelian group which is generated by elements of order $2$. It follows that $\langle G, \gamma' \rangle$ is isomorphic to $\mathbb{Z}_2^k$ for some $k$, a contradiction.

    Thus there exists a circuit $C$ whose label has order greater than $2$. Choose $C$ so that the component of $G-E(C)$ which contains $a$ has as many edges as possible. Subject to this, choose $C$ so that $C$ has as many edges as possible. If no other component of $G-E(C)$ has any edges, then $b$ is in the same component as $a$ for parity reasons, and there exists an Eulerian trail $T$ of this component which goes from $a$ to $b$. Thus there are trails $T_1$ and $T_2$ so that $T=T_1 T_2$ and so that $T_1 C T_2$ is an Eulerian trail of $G$. Since $C$ has order greater than $2$, the trails $T_1 C T_2$ and $T_1 C^{-1} T_2$ have different labels, a contradiction.

    Thus there exists a component $H$ of $G-E(C)$ which has at least one edge and does not contain $a$. For parity reasons, $H$ does not contain $b$ either, and so it has an Eulerian circuit $C_H$. By cyclically changing the order of the arcs of $C$ and $C_H$, we may assume that they begin and end at the same vertex. Note that this operation does not change the label of a circuit if the group is abelian. (In general, it does not change the order of its label.) Both $C_H$ and $C_HC$ have order at most $2$ since otherwise they are better choices than $C$. Thus \begin{align*}\gamma(C_H)\gamma(C)=\gamma(C)^{-1}\gamma(C_H)^{-1}=\gamma(C)^{-1}\gamma(C_H),
    \end{align*}which contradicts the fact that $C$ has order greater than $2$. We note that this is the one place where we truly rely on the group being abelian.
\end{proof}

\begin{figure}
    \center
    \begin{tikzpicture}
        \begin{scope}
            \coordinate (0) at (2*0,0);
            \coordinate (1) at (2*1,0);
            \coordinate (2) at (2*2,0);

            \fill[fill=black] (0) circle (0.07);
            \fill[fill=black] (1) circle (0.07);
            \fill[fill=black] (2) circle (0.07);

            \draw[line width=1pt, midarrow] (0) to[bend left =45] node[above]{\small $1$} (1);
            \draw[line width=1pt, midarrow] (0) to[bend right=45] node[below]{\small $(123)$} (1);
            \draw[line width=1pt, midarrow] (1) to[bend left =45] node[above]{\small $1$}(2);
            \draw[line width=1pt, midarrow] (1) to[bend right=45] node[below]{\small $(12)$} (2);

            \node[left, xshift=-0.5mm] at (0) {\small $a=b$};
        \end{scope}
        %



    \end{tikzpicture}
    \caption{A group-labeled graph $(G, \gamma)$ whose Eulerian trails from $a$ to $b$ all have the same label $(123)(12) = (13) = (12)(132)$ in the symmetric group $\mathfrak{S}_3$, but which has no shifting $\gamma'$ so that $\langle G, \gamma' \rangle\cong \mathbb{Z}_2^k$ for some $k \in \mathbb{N}$.} 
    \label{fig:counterexample2edgeConn}
\end{figure}
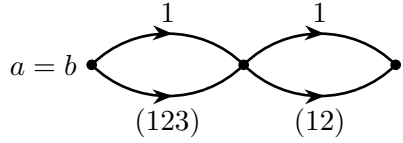

Lemma~\ref{lem:abelian} does {not} hold for all groups; it is possible that every Eulerian trail from $a$ to $b$ has the same label yet there is no shifting so that the entire generated subgroup is isomorphic to $\mathbb{Z}_2^k$ for some $k$. See Figure~\ref{fig:counterexample2edgeConn} for a concrete example. This example has no such shifting since shifting does not change the order of the label of any circuit; note that it has a circuit whose label $(123) \in \mathfrak{S}_3$ has order~$3$. However, we will prove that Lemma~\ref{lem:abelian} does hold for general groups under the additional assumption that the graph is $3$-edge-connected; see Proposition~\ref{prop:3edgeConn}. 

\subsection*{Maintaining $3$-edge-connectivity}
In order to handle the $3$-edge-connected case, we require Lemma~\ref{lem:splittingMinCut} below, which shows how to remove an edge while maintaining $3$-edge-connectivity. This lemma is essentially implied by theorems of Lov\'{a}sz~\cite{LovaszBook79} and Mader~\cite{MaderMaintainConn} about splitting off incident edges while maintaining pairwise edge-connectivity. However, as the connection is not obvious and we require a somewhat technical statement, we include a short proof anyways. We note that there are many improvements of these splitting off theorems; see for instance~\cite{Mader1995, localEdgeConn} for combinatorial improvements and~\cite{almostLinearSplittingOff, elementConnectivity, GabowSplitting, splitOffAlg} for algorithmic versions.

For convenience, we say that a \emph{valid instance} is a tuple $(G, a, b)$ such that $G$ is a $3$-edge-connected graph and $a$ and $b$ are vertices of $G$ such that there exists an Eulerian trail from $a$ to $b$. \emph{Smoothing} a vertex $v$ of a graph $G$ means that, if $v$ is incident to exactly two edges and neither of them are loops, then we delete $v$ and add a new edge whose ends are the neighbors of $v$ in $G$. (Otherwise, if $v$ does not satisfy this condition, then smoothing $v$ does nothing.) Let $(G, a, b)$ be a valid instance, and let $e$ be an edge of $G$ which is incident to $a$. We write $G_e$ for the graph obtained from $G$ by deleting $e$ and then smoothing $a$. See Figure~\ref{fig:deleteAndSmooth}. The following lemma says that there is always an edge $e$ with ends $a$ and $a'$ so that $(G_e, a', b)$ is a valid instance. Note that if $G_e \neq G-e$, then $a$ is a degree-$3$ vertex with no incident loops in $G$. So in this case, $a \neq b$, and both $a'$ and $b$ are still vertices of~$G_e$.


\begin{figure}
    \centering
    \begin{tikzpicture}
        \begin{scope}
            \coordinate (a) at (0,0);

            \coordinate (a1) at (30:1);
            \coordinate (a11) at (20:1.5);
            \coordinate (a12) at (30:1.5);
            \coordinate (a13) at (40:1.5);

            \coordinate (a2) at (150:1);
            \coordinate (a21) at (140:1.5);
            \coordinate (a22) at (150:1.5);
            \coordinate (a23) at (160:1.5);
            
            \coordinate (a3) at (270:1);
            \coordinate (a31) at (260:1.5);
            \coordinate (a32) at (270:1.5);
            \coordinate (a33) at (280:1.5);

            \draw[line width=1pt]
                (a)--(a1)
                (a)--(a2)
                (a)-- node[left, xshift=0.3mm] {\small $e$} (a3)
                (a1)--(a11)
                (a1)--(a12)
                (a1)--(a13)
                (a2)--(a21)
                (a2)--(a22)
                (a2)--(a23)
                (a3)--(a31)
                (a3)--(a32)
                (a3)--(a33);

            \fill[fill=black] (a) circle (0.07);
            \fill[fill=black] (a1) circle (0.07);
            \fill[fill=black] (a2) circle (0.07);
            \fill[fill=black] (a3) circle (0.07);

            \draw (a) node[right, yshift=-1mm] {\small $a$};
            \draw (a3) node[right, yshift=1mm] {\small $a'$};

            \node at (0,-2.0) {$G$};
        \end{scope}
        \begin{scope}[xshift=3.5cm, yshift=-0.3cm]
            \draw[-stealth, line width=2pt]
                (-1.35,0) -- node[above] {delete $e$ and} node[below] {smooth $a$} (1.35,0);
        \end{scope}
        \begin{scope}[xshift=7cm]
            \coordinate (a) at (0,0);

            \coordinate (a1) at (30:1);
            \coordinate (a11) at (20:1.5);
            \coordinate (a12) at (30:1.5);
            \coordinate (a13) at (400:1.5);

            \coordinate (a2) at (150:1);
            \coordinate (a21) at (140:1.5);
            \coordinate (a22) at (150:1.5);
            \coordinate (a23) at (160:1.5);
            
            \coordinate (a3) at (270:1);
            \coordinate (a31) at (260:1.5);
            \coordinate (a32) at (270:1.5);
            \coordinate (a33) at (280:1.5);

            \draw[line width=1pt]
                (a1) to[bend left=30] (a2)
                (a1)--(a11)
                (a1)--(a12)
                (a1)--(a13)
                (a2)--(a21)
                (a2)--(a22)
                (a2)--(a23)
                (a3)--(a31)
                (a3)--(a32)
                (a3)--(a33);

            \fill[fill=black] (a1) circle (0.07);
            \fill[fill=black] (a2) circle (0.07);
            \fill[fill=black] (a3) circle (0.07);

            \draw (a3) node[right, yshift=1mm] {\small $a'$};

            \node at (0,-2.0) {$G_e$};
        \end{scope}
    \end{tikzpicture}
    \caption{Illustrations of $G$ and $G_e$ when $G_e \neq G-e$.}
    \label{fig:deleteAndSmooth}
\end{figure}
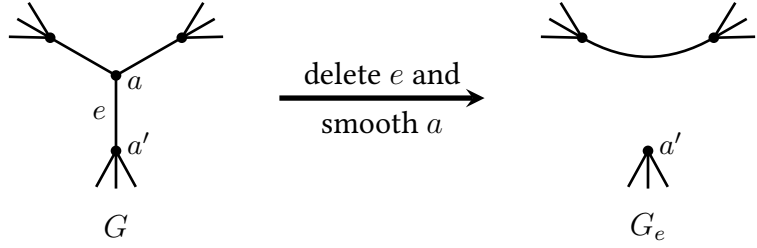

Now we prove the main lemma about how to reduce from one valid instance to another.

\begin{lemma}
\label{lem:splittingMinCut}
For any valid instance $(G, a, b)$ with at least one edge, there exists an edge $e$ with ends $a$ and $a'$ (possibly with $a=a'$) so that $(G_e, a', b)$ is a valid instance.
\end{lemma}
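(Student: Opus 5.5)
The plan is to handle parity and Eulerian trails first, which are easy, and put all the effort into preserving $3$-edge-connectivity, using uncrossing of tight cuts.

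\emph{Parity.} Since $G$ is $3$-edge-connected it is connected. Deleting any edge $e=aa'$ flips the degree parities of $a$ and $a'$; a loop flips neither.
\begin{itemize}
\item If $a\neq b$ and $a'\neq b$, then in $G-e$ the odd vertices are exactly $a'$ and $b$.
\item If $a\neq b$ and $a'=b$, then every vertex of $G-e$ is even.
\item If $a=b$ and $e$ is not a loop, then the odd vertices are exactly $a'$ and $b=a$.
\end{itemize}
So whenever $G-e$ is connected there is an Eulerian trail from $a'$ to $b$. Smoothing a degree-$2$ vertex other than $a'$ and $b$ preserves connectivity and the existence of such a trail; the remark before the lemma shows that $a'$ and $b$ survive. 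Also, if $a$ carries a loop $e$, then $G-e=G_e$ has the same cuts as $G$, so we are done. Hence I assume there is no loop at $a$, and in particular $G$ has at least two vertices. It remains to choose $e$ so that $G_e$ is $3$-edge-connected.

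\emph{Bad cuts.} Suppose $X$ is a proper nonempty vertex set with $|\delta_{G-e}(X)|\le 2$. Then $|\delta_G(X)|=3$ and $e\in\delta_G(X)$. Since $|\delta_G(X)|\equiv\sum_{v\in X}\deg(v)\pmod 2$, an odd cut must separate the two odd-degree vertices. So if $a=b$ no such $X$ exists and any edge works.

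Now assume $a\neq b$. Call $X$ \emph{tight} if $a\in X$, $b\notin X$ and $|\delta(X)|=3$; complementing if necessary, every bad cut is of this form. In $G_e$ the set $\{a\}$ disappears when $a$ is smoothed, and smoothing creates no new small cuts. Therefore it suffices to find an edge $e$ at $a$ that lies in $\delta(Z)$ for no tight $Z\neq\{a\}$. If no tight set other than $\{a\}$ exists, any edge at $a$ works.

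\emph{Choice of $e$.} Otherwise, let $X$ be an inclusion-minimal tight set with $X\neq\{a\}$. First I claim $a$ has a neighbour in $X\setminus\{a\}$. If not, then $\delta(X\setminus\{a\})\subseteq\delta(X)$, so $|\delta(X\setminus\{a\})|\le 3$. But $X\setminus\{a\}$ avoids both $a$ and $b$, so this cut has even size, hence size at most $2$, contradicting $3$-edge-connectivity. So pick $e=aa'$ with $a'\in X\setminus\{a\}$.

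Now let $Z\neq\{a\}$ be tight. By submodularity,
\[
|\delta(X\cap Z)|+|\delta(X\cup Z)|\le|\delta(X)|+|\delta(Z)|=6 .
\]
Both $X\cap Z$ and $X\cup Z$ separate $a$ from $b$, so each has odd cut size at least $3$, and hence $X\cap Z$ is tight. By minimality of $X$, either $X\cap Z=X$ or $X\cap Z=\{a\}$.

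If $X\subseteq Z$, then both ends of $e$ lie in $Z$, so $e\notin\delta(Z)$, as required. If $X\cap Z=\{a\}$, use posimodularity:
\[
|\delta(X\setminus Z)|+|\delta(Z\setminus X)|\le|\delta(X)|+|\delta(Z)|=6 .
\]
Both $X\setminus Z$ and $Z\setminus X$ are nonempty, because $X\neq\{a\}$ and $Z\neq\{a\}$. Both avoid $a$ and $b$, so each has an even cut size that is at least $3$, hence at least $4$. This gives a sum of at least $8>6$, a contradiction. So no tight $Z\neq\{a\}$ contains $e$ in its cut.

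The main obstacle is this last uncrossing step: identifying the right minimal tight set and excluding the crossing case. The parity argument that forces cuts avoiding both $a$ and $b$ to have size at least $4$ is what makes it work.
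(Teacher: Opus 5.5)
Your proof is correct, and its skeleton matches the paper's. You restrict to $3$-cuts $\delta(X)$ with $a\in X$ and $b\notin X$ (your tight sets), take a minimal nontrivial one $X$, look at an edge from $a$ into $X\setminus\{a\}$, and uncross $X$ with a tight set $Z$ whose cut contains that edge. Submodularity and minimality then force $X\cap Z=\{a\}$.

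The difference lies in the framing and in how this last case is ruled out. The paper argues by contradiction, assuming every edge at $a$ lies in a nontrivial tight cut. It uses three edge-disjoint $a$--$b$ paths to deduce $\deg(a)=3$. It then rules out $X\cap Y=\{a\}$ by counting edges from $a$ and from $X\setminus Y$ into $V(G)\setminus X$. So it needs only submodularity and $3$-edge-connectivity, at the cost of the extra Menger step.

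You instead use posimodularity together with the parity fact that every cut avoiding both $a$ and $b$ is even, hence of size at least $4$. This needs no information about $\deg(a)$. It also gives a direct, constructive choice of $e$: any edge from $a$ into a minimal nontrivial tight set.

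In fact your posimodularity step alone shows that any two tight sets are nested: if $X\setminus Z$ and $Z\setminus X$ were both non-empty, their cuts would sum to at least $8>6$. So your submodularity step is redundant, the tight sets form a chain, and every nontrivial tight set contains $X$.

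Two claims are stated without justification, but both are correct:
\begin{itemize}
\item $G_e=G-e$ when $e$ is a loop at $a$. This uses $|\delta(\{a\})|\geq 3$.
\item Smoothing creates no new small cuts. This follows from $|\delta_{G_e}(Y)|=\min\{|\delta_{G-e}(Y)|,|\delta_{G-e}(Y\cup\{a\})|\}$ for $Y\subseteq V(G_e)$.
\end{itemize}
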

\begin{proof}
Going for a contradiction, suppose otherwise. First we prove the following claim.

\begin{claim}
\label{claim:EulTrailExists}
There is no loop at $a$, the vertices $a$ and $b$ are different, and for any edge $e$ with ends $a$ and $a'$, the graph $G_e$ has an Eulerian trail from $a'$ to~$b$.
\end{claim}
\begin{proof}
    It is well-known that a graph $G$ has an Eulerian trail from a vertex $a$ to a vertex $b$ if and only if $G$ is connected, every vertex besides $a$ and $b$ has even degree, and $a$ and $b$ have odd degree if they are different. (If they are the same, then this vertex has even degree by the Handshaking Lemma.) Note that for any edge $e$ incident to $a$, the graph $G-e$ and thus also the graph $G_e$ is connected since $G$ is $3$-edge-connected. Also notice that the parity conditions we just described hold for $(G_e, a', b)$ since they hold for $(G, a, b)$. Thus $G_e$ has an Eulerian trail from $a'$ to~$b$ for any edge $e$ with ends $a$ and $a'$.

    It follows that there is no loop at $a$ since otherwise we can delete this edge and retain $3$-edge-connectivity. Similarly, if $a=b$, then $G$ is Eulerian and so every cut is even. Thus $G$ is in fact $4$-edge-connected and we can take $e$ to be any edge incident to $a$, a contradiction.
\end{proof}

By Claim~\ref{claim:EulTrailExists} and since $(G,a,b)$ is a counterexample, there is no edge $e$ incident to $a$ so that $G_e$ is $3$-edge-connected. Now we prove one final claim. When we omit subscripts, we are referring to the graph $G$. So, given a set $X \subseteq V(G)$, we write $\delta(X)$ for $\delta_G(X)$.

\begin{claim}
\label{claim:edgeCuts}
For any edge $e$ incident to $a$, there exists a set of vertices $X$ which contains $a$ and at least one additional vertex, does not contain $b$, and has $|\delta(X)|=3$ and $e \in \delta(X)$.
\end{claim}
\begin{proof}
If $a$ has degree larger than $3$, then smoothing $a$ in $G-e$ does nothing and thus $G-e$ has a $2$-edge-cut. In $G$ this yields a $3$-edge-cut containing $e$. Since $G$ has an Eulerian trail from $a$ to $b$, every odd cut separates $a$ and $b$. Finally, the side containing $a$ has at least two vertices since $a$ has degree larger than $3$ by assumption.

If $a$ has degree exactly $3$, then we can find the desired set $X$ by taking a small cut in $G_e$ and adding $a$ to the side that $a$ has more neighbors in. 
\end{proof}

Now, since $G$ is $3$-edge-connected, there exist three pairwise edge-disjoint paths between $a$ and $b$. By Claim~\ref{claim:edgeCuts}, all of the neighbors of $a$ must be on one of these paths. Thus $a$ has degree exactly $3$ in $G$.

Among all edges $e$ which are incident to $a$ and all sets $X$ which satisfies the conditions of Claim~\ref{claim:edgeCuts} for $e$, choose $e$ and $X$ such that $|X|$ is minimum. Note that $X$ must contain both ends of some edge $f$ which is incident to $a$. Let $Y$ be a set of vertices which satisfies the conditions of Claim~\ref{claim:edgeCuts} for $f$.

By submodularity, $|\delta(X \cup Y)|+|\delta(X \cap Y)| \leq |\delta(Y)|+|\delta(X)|=6$. Note that $a \in X\cap Y$ and $b \notin X \cup Y$. So since $G$ is $3$-edge-connected, we actually have that $|\delta(X \cup Y)|=|\delta(X \cap Y)| = 3$. Since we chose $e$ and $X$ instead of $f$ and $X \cap Y$, we find that $a$ is the only vertex in $X \cap Y$. Note that both $X\setminus Y$ and $Y\setminus X$ are non-empty. Since $a$ has degree three, there exists one of $X\setminus Y$ and $Y\setminus X$ which has at most one edge to $a$. We suppose that $X\setminus Y$ has at most one edge to $a$; the other case is very similar. Since $G$ is $3$-edge-connected, $|\delta(X\setminus Y)|\geq 3$. The two of these edges that do not go to $a$ contribute to $\delta(X)$. However, the two edges from $a$ to $V(G)\setminus X$ also contribute to $\delta(X)$, a contradiction since $|\delta(X)|=3$. This completes the proof.
\end{proof}

\section{The 3-edge-connected case}
\label{sec:3ConnFinal}

In this section we completely take care of the $3$-edge-connected case using Lemma~\ref{lem:splittingMinCut}. For convenience, given a valid instance $(G, a,b)$, we say that a circuit $L$ of $G$ is \emph{feasible} if there exists an Eulerian trail from $a$ to $b$ which contains $L$ as a subcircuit. 

Suppose that $\gamma$ is a group-labeling of $G$ so that $(G,a,b)$ has a feasible circuit $L$ whose label has order greater than $2$. We claim that then there exist Eulerian trails from $a$ to $b$ of different labels. This is because, if $T_1$ and $T_2$ are trails such that $T_1 L T_2$ is an Eulerian trail from $a$ to $b$ (possibly with $T_1$ and/or $T_2$ being empty), then $\gamma(T_1 L T_2)\neq \gamma(T_1 L^{-1} T_2)$ since $\gamma(L)\neq \gamma(L^{-1})$. In this section we show that either there is a feasible circuit whose label has order greater than $2$, or there is a shifting $\gamma'$ of $\gamma$ such that $\langle G, \gamma'\rangle \cong \mathbb{Z}_2^k$ for some $k\in \mathbb{N}$. 

Our strategy is to maintain a stronger inductive hypothesis which essentially says that if $\langle G, \gamma'\rangle \cong \mathbb{Z}_2^k$, then every edge of $G$ is in feasible circuits whose labels generate $\mathbb{Z}_2^k$.


\begin{proposition}
\label{prop:3edgeConn}
Let $(G, a,b)$ be a valid instance, and let $\gamma$ be a group-labeling of $G$. Then exactly one of the following holds.\begin{enumerate}
    \item There exists a feasible circuit $L$ with $\gamma(L)\neq \gamma(L^{-1})$.
    \item There exists a shifting $\gamma'$ of $\gamma$ so that $\langle G, \gamma'\rangle\cong \mathbb{Z}_2^k$ for some $k\in \mathbb{N}$. Moreover, for any $e\in E(G)$, the set $\{\gamma'(L): L$ is a feasible circuit that contains $\vec{e}\}$ generates $\langle G, \gamma'\rangle$.
\end{enumerate}
\end{proposition}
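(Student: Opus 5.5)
Exclusivity is the easy direction. If (2) holds, every label $\gamma'(L)$ lies in a copy of $\mathbb{Z}_2^k$, so $\gamma'(L)=\gamma'(L)^{-1}=\gamma'(L^{-1})$. By Lemma~\ref{lem:basicProperties}, $\gamma(L)$ is conjugate to $\gamma'(L)$ and so also has order at most $2$, which rules out (1). The real work is to show that if (1) fails then (2) holds. I would do this by induction on $|E(G)|$, using Lemma~\ref{lem:splittingMinCut} to reduce to a smaller valid instance.

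\textbf{Base case.} If $G$ has no edges, the statement is trivial with $k=0$.

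\textbf{Reduction.} Otherwise, Lemma~\ref{lem:splittingMinCut} gives an edge $e$ with ends $a,a'$ such that $(G_e,a',b)$ is a valid instance. If $G_e\ne G-e$, then $a$ has degree $3$ and is smoothed. The new edge $g$ is obtained by splitting off the two other arcs at $a$, and it receives the product label, so $G_e$ carries an induced labeling $\gamma_e$.

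\textbf{Transferring feasibility.} Every Eulerian trail of $G_e$ from $a'$ to $b$ gives one of $G$ from $a$ to $b$: prepend $\vec e$ and, if needed, replace $g$ by the corresponding two arcs through $a$. Hence every feasible circuit of $G_e$ yields a feasible circuit of $G$ with the same label, and it passes through $a$ exactly when it used $g$. So (1) fails for $G_e$, and induction gives a shifting $\gamma_e'$ with $\langle G_e,\gamma_e'\rangle\cong\mathbb{Z}_2^k$ satisfying the generation property.

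\textbf{Lifting the shifting.} Shifting at vertices other than $a$ lifts to $G$. Then shift at $a$ so that one of the two split arcs gets label $1$; the other then gets the label of $g$. The only edge left uncontrolled is $e$, with some label $\beta$ (after the shift at $a$).

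\textbf{Controlling $e$ (the main obstacle).} I need feasible circuits of $G$ through $e$. Since $a\ne b$ (or, if $a=b$, $a$ has even degree and there are other edges at $a$), take an Eulerian trail of $G$ from $a$ to $b$ that begins with $\vec e$ and later returns to $a$. Its initial segment up to the return to $a$ is a feasible circuit $L$ containing $e$, so $L=\vec e\,P$ with $\gamma(L)=\beta\,\gamma(P)$. I would argue that $P$ can be taken, up to reordering, to be an arc sequence of $G_e$ whose label lies in the $\mathbb{Z}_2^k$. Since $\gamma(L)$ has order at most $2$, we get $\beta\,x\,\beta\,x=1$ for suitable $x$ in the group. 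Varying $P$ over the feasible circuits of $G_e$ through $a'$, which generate the whole group by the generation property applied to an edge at $a'$, should force $\beta$ to commute with the group and satisfy $\beta^2=1$. Then $\langle G,\gamma'\rangle$ is generated by $\mathbb{Z}_2^k$ together with a central involution, so it is still elementary abelian.

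\textbf{Recovering the generation property in $G$.} I would compose feasible circuits: a feasible circuit through $e$ combined with feasible circuits of $G_e$ spliced in at a common vertex. This uses $3$-edge-connectivity to guarantee that feasible circuits through any given edge can be rerouted through $e$. It then shows that for every edge the feasible circuits through it generate the group, including $\beta$.

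I expect the difficult step to be the control of $e$. There the non-abelian group appears, and I must choose two feasible circuits through $e$, say $L_1=\vec e P_1$ and $L_2=\vec e P_2$, together with $P_1P_2^{-1}$ realized as a feasible circuit of $G_e$, so that their orders being at most $2$ force $\beta$ to be a central involution. I would first try this with $P_1,P_2$ ranging over feasible circuits of $G_e$ at $a'$ that are glued onto a fixed return path to $a$.
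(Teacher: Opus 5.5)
Your overall structure matches the paper: exclusivity via Lemma~\ref{lem:basicProperties}, induction through Lemma~\ref{lem:splittingMinCut}, lifting feasible circuits and the shifting, and a final shift at $a$. However, the step you flag as the main obstacle is not proved, and the mechanism you propose for it does not work as stated.

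Knowing that $\beta x$ is an involution for every $x$ in a generating set of $\Gamma_e=\langle G_e,\gamma_e'\rangle$ is not enough. In $\mathfrak{S}_3$, take $x=(12)$ and $\beta=(123)$: then $\beta x$ is a transposition, yet $\beta$ has order $3$ and is not central.

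What you actually need is a set $S\subseteq\Gamma_e$ with two properties:
\begin{enumerate}
\item $\beta x$ is the label of a feasible circuit for every $x\in S$;
\item the quotients $x^{-1}y$ with $x,y\in S$ generate $\Gamma_e$.
\end{enumerate}
Then, if outcome~1 fails, $\beta x\beta=x$ for each $x\in S$. This gives $(\beta x)(\beta y)=xy=(\beta y)(\beta x)$, so $\langle\Gamma_e,\beta\rangle$ is generated by pairwise commuting involutions.

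Your plan to obtain such quotients from ``feasible circuits of $G_e$ at $a'$ glued onto a fixed return path $R$'' has no justification. Nothing guarantees that $\vec e R$ and $\vec e C R$ are simultaneously feasible, since deleting $e$ and $R$ may separate $C$ from the rest of the trail. Also, the generation property at an edge incident to $a'$ yields circuits that need not pass through $a$ at all. (Note too that $P$ is a trail from $a'$ to $a$, not a circuit of $G_e$.)

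The missing idea is to apply the inductive generation property at an edge incident to $a$ in $G_e$. This is the new edge $e_{1,2}$, or any $f\neq e$ at $a$ when $a$ is not smoothed.
\begin{itemize}
\item Each such feasible circuit $L$ lifts into an Eulerian trail $T_1LT_2$ of $G$ with $T_1$ starting with $\vec e$.
\item Since $L$ passes through $a$, write $L=L_1L_2$ with $L_1$ ending at $a$.
\item Then $T_1L_1$ and $T_1L_2^{-1}$ are both feasible circuits beginning with $\vec e$, witnessed by $T_1L_1L_2T_2$ and $T_1L_2^{-1}L_1^{-1}T_2$.
\item Because the labels of $L_1$ and $L_2$ lie in the elementary abelian group $\Gamma_e$,
\[
\gamma'(T_1L_1)^{-1}\gamma'(T_1L_2^{-1})=\gamma'(L_1)^{-1}\gamma'(L_2)^{-1}=\gamma'(L).
\]
\end{itemize}
By induction, these $\gamma'(L)$ generate $\Gamma_e$. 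This gives exactly the quotient family above, and it also shows that labels of feasible circuits starting with $\vec e$ generate $\langle G,\gamma'\rangle$.

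Your final step is also only a sketch, and it needs no rerouting via $3$-edge-connectivity. Lifting gives $\Gamma^f\supseteq\Gamma_e$. An edge-maximal circuit through both $e$ and $f$, which exists by $2$-edge-connectivity, is feasible by a parity argument on the components of its complement. Its label then places $\beta$ in $\Gamma^f$.
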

\begin{proof}
    It is clear that both of the conditions cannot hold, so we just prove that at least one of the conditions holds. We prove this by induction on the number of edges in $G$. We may assume that $G$ has an edge as otherwise the second outcome trivially holds.

    By Lemma~\ref{lem:splittingMinCut}, there exists an edge $e \in E(G)$ with ends $a$ and $a'$ such that $(G_e, a', b)$ is a valid instance. If $a$ is incident to exactly two edges in $G-e$, then we write $e_1$ and $e_2$ for those two incident edges and $e_{1,2}$ for the new edge of $G_e$. We may assume that $\vec{e}_1$ is oriented towards $a$, $\vec{e}_2$ is oriented away from $a$, and $\vec{e}_{1,2}$ is oriented from the tail of $\vec{e}_1$ to the head of $\vec{e}_2$. Finally, let $\gamma_e$ denote the group-labeling of $G_e$ corresponding to $\gamma$. (First we forget about the label of $e$. Then, if $G_e \neq G-e$, we label $\vec{e}_{1,2}$ by $\gamma(\vec{e}_1)\gamma(\vec{e}_2)$.) 

    Now we apply induction to the valid instance $(G_e, a', b)$ and labeling $\gamma_e$. If the first outcome holds, then $(G, a,b)$ also has a feasible circuit $L$ so that $\gamma(L)$ has order greater than~$2$. Thus we may assume that there exists a shifting $\gamma_e'$ of $\gamma_e$ such that the second outcome holds for $(G_e, a', b)$. For convenience, we denote $\langle G_e, \gamma'_e\rangle$ by $\Gamma_e$. Let $k$ be the integer such that $\Gamma_e \cong \mathbb{Z}_2^k$. 
    
    We now define a corresponding shifting $\gamma'$ of $\gamma$ in~$G$. First perform the same shiftings (by group elements at vertices) as used to obtain $\gamma_e'$ from $\gamma_e$. Finally, if $G_e\neq G-e$, then perform one final shifting at $a$ in order to make $\vec{e}_1$ labeled by the identity.
    

    Note that every orientation of an edge in $E(G-e)$, except for possibly $e_1$ and $e_2$ if they exist, has the same label according to $\gamma'$ and $\gamma_e'$. Moreover, if $e_1$ and $e_2$ do exist, then $\gamma'(\vec{e}_1)$ is the identity, and $\gamma'(\vec{e}_2)=\gamma_e'(\vec{e}_{1,2})$. Thus $\langle G - e, \gamma'\rangle = \Gamma_e$, which is isomorphic to $\mathbb{Z}_2^k$. Without loss of generality we assume that $\vec{e}$ is oriented away from $a$, and we write $\beta$ for $\gamma'(\vec{e})$. (If $e$ is a loop, then we orient $\vec{e}$ arbitrarily.) Note that $\langle G, \gamma'\rangle$ is the group generated by $\Gamma_e \cup \{\beta\}$.

    We first show that the set $\{\gamma'(L) : L$ is a feasible circuit that contains $\vec{e}\}$ generates $\langle G, \gamma'\rangle$. We write $\mathcal{L}$ for the collection of all feasible circuits of $(G, a, b)$ whose first arc is $\vec{e}$. 

    \begin{claim}
    \label{clm:ecircuit}
        The set $\{\gamma'(L): L \in \mathcal{L}\}$ generates $\langle G, \gamma'\rangle$.
    \end{claim}
    \begin{proof}
        Because $\Gamma_e \cup \{\beta\}$ generates $\langle G, \gamma'\rangle$, it suffices to show every element in $\Gamma_e \cup \{\beta\}$ is generated by $\{\gamma'(L): L \in \mathcal{L}\}$.
    
        If $e_1$ and $e_2$ exist, then we apply the inductive statement to the edge $f = e_{1,2}$. Otherwise we apply the inductive statement to an edge $f\neq e$ which is incident to $a$. (We may assume that such an edge exists since if $e$ is the only edge of $G$, then it is a loop at the vertex $a=b$, and the claim holds.) Thus, by the inductive statement, the group generated by $\{\gamma_e'(L): L$ is a feasible circuit of $(G_e, a', b)$ that contains $\vec{f}\}$ is $\Gamma_e$. In both cases, it follows that $\Gamma_e$ is generated by the $\gamma'$-labels of the circuits $L$ of $G$ such that there exist trails $T_1, T_2$ so that $T_1 L T_2$ is an Eulerian trail from $a$ to $b$ in $G$, the first edge of $T_1$ is $\vec{e}$, and $L$ contains at least one edge that is incident to $a$. 

        Consider such trails $T_1, L, T_2$ as above. Let $L_1$ and $L_2$ be subtrails of $L$ (possibly with one of $L_1, L_2$ being empty) such that $L=L_1 L_2$ and $a$ is the last vertex of $L_1$. Then both $T_1 L_1$ and $T_1 L_2^{-1}$ are in $\mathcal{L}$. Note that \begin{align*}
        (\gamma'(T_1 L_1))^{-1}\gamma'(T_1 L_2^{-1})&=\gamma'(L_1)^{-1}\gamma'(T_1)^{-1}\gamma'(T_1)\gamma'(L_2)^{-1}\\&=\gamma'(L_1)^{-1}\gamma'(L_2)^{-1}\\&=\gamma'(L_1)\gamma'(L_2),
        \end{align*}where the last line holds because $\gamma'(L_1)$ and $\gamma'(L_2)$ have order at most~$2$. (To see this, recall that $\vec{e}$ is in $T_1$ and the other arcs generate $\Gamma_e$, which is isomorphic to $\mathbb{Z}_2^k$.) Since $\gamma'(L_1)\gamma'(L_2)= \gamma'(L)$, it follows that the $\gamma'$-labels of the circuits in $\mathcal{L}$ generate $\Gamma_e$.
    
        Moreover, for any $L \in \mathcal{L}$, there exists $\alpha \in \Gamma_e$ so that $\gamma'(L)=\beta\alpha$. Since we have shown that $\alpha$ is generated by the $\gamma'$-labels of the circuits in $\mathcal{L}$, it follows that $\beta$ is generated by them as well.
    \end{proof}

    Since every circuit in $\mathcal{L}$ is feasible for $(G, a, b)$, Claim~\ref{clm:ecircuit} implies that $\{\gamma'(L): L$ is a feasible circuit of $(G, a, b)$ that contains $\vec{e}\}$ generates $\langle G, \gamma'\rangle$.

    We now show that $\langle G, \gamma'\rangle$ is isomorphic to $\mathbb{Z}_2^{k'}$ for some $k' \in \mathbb{N}$. Clearly this holds if $\beta \in \Gamma_e$, so assume otherwise. Now consider the $\gamma'$-label of a circuit $L \in \mathcal{L}$. It must be of the form $\beta\alpha$ for some $\alpha \in \Gamma_e$. If any of these elements $\beta\alpha$ has order greater than~$2$, then the first outcome of the lemma holds, and we are done. So we may assume that each of these elements has order at most~$2$. Then any two of these elements $\beta\alpha$ and $\beta\alpha'$ commute since
        \begin{align*}
        (\beta\alpha)(\beta\alpha') &= (\alpha\beta^{-1})(\beta\alpha')
        = \alpha\alpha'
        =\alpha'\alpha
        = (\alpha'\beta^{-1})(\beta\alpha)
        = (\beta\alpha')(\beta\alpha).
        \end{align*}
    Any group which is generated by a finite number of commuting elements of order $2$ is isomorphic to $\mathbb{Z}_2^{k'}$ for some $k' \in \mathbb{N}$; thus it follows that $\langle G, \gamma'\rangle$ is isomorphic to~$\mathbb{Z}_2^{k+1}$.

    We now show that for any edge $f \in E(G - e)$, the set $\{\gamma'(L) : L$ is a feasible circuit of $(G,a,b)$ that contains $\vec{f}\}$ generates $\langle G,\gamma'\rangle$. Fix $f \in E(G - e)$, and let $\Gamma^f$ denote the group generated by the labels of feasible circuits of $(G,a,b)$ containing $\vec{f}$. Because each feasible circuit of $(G_e, a', b)$ ``lifts'' to a feasible circuit of $(G, a, b)$ of the same label, $\Gamma^f \supseteq \Gamma_e$. Thus we may assume that $\beta \not\in \Gamma_e$, and it suffices to show that $\beta \in \Gamma^f$.

    It then suffices to show that there exists a feasible circuit which contains both $e$ and $f$ as underlying undirected edges. Since the group is isomorphic to $\mathbb{Z}_2^{k+1}$, the order and orientation do not matter. Once we show that such a feasible circuit exists, it follows that $\beta$ is also generated by the set $\{\gamma'(L): L$ is a feasible circuit that contains $\vec{f}\}$, as we already know that the labels of all other edges besides $e$ are generated.
    
    Now we argue that such a feasible circuit exists. Since $G$ is $2$-edge-connected, every pair of edges of $G$ is in a common circuit by Menger's Theorem for edge-disjoint paths. Let $L$ be an edge-maximal circuit of $G$ which contains both $e$ and $f$. We claim that $L$ is feasible. By edge-maximality, every component of $G-E(L)$ which contains an edge has a vertex of odd degree. Thus, if $a=b$, then $L$ is an Eulerian circuit and we are done. If $a \neq b$, then $a$ and $b$ are in the same component of $G-E(L)$, and every other component is just an isolated vertex. Let $T$ be an Eulerian trail from $a$ to $b$ within the non-trivial component. Let $v$ be a vertex along both $T$ and $L$, and let $T = T_1T_2$ such that the head of $T_1$ equals the tail of $T_2$ equals $v$ (possibly with one of $T_1,T_2$ being empty). Then $T_1LT_2$ is an Eulerian trail of $G$, thus showing that $L$ is feasible, as desired. This completes the proof of Proposition~\ref{prop:3edgeConn}.
\end{proof}

We note Proposition~\ref{prop:3edgeConn} implies that if $G$ is a 3-edge-connected Eulerian graph with $a=b$ such that every Eulerian trail from $a$ to $b$ has the same label, then the same is true for every choice of $a=b$ (and in fact there is the same label for every choice). This does not hold for general Eulerian graphs: consider changing $a=b$ to the center vertex in Figure~\ref{fig:counterexample2edgeConn}.

\section{The general case}
\label{sec:general}

In this section we show how to reduce the general case of graphs that are not necessarily $3$-edge-connected to Proposition~\ref{prop:3edgeConn}. The main theorem is stated as Theorem~\ref{thm:fullTechnical}.

To state the full theorem, we first require some definitions. A \emph{core} (also called a \emph{$3$-core}) of a graph $G$ is a maximal set of vertices of $G$ whose pairwise edge-connectivity is at least $3$. It is well-known that the cores of a graph are pairwise-disjoint sets whose union is $V(G)$. Cores have been used for many different applications, for instance to study the structure of graphs with a forbidden immersion~\cite{WollanImmersions}. As noted in~\cite{Devos2013note}, this approach is equivalent to looking at Gomory-Hu trees, except that we do not care about the precise capacities of edges in the tree, just whether the capacities are at least~$3$. 

We use the following lemma to obtain a valid instance from each core.

\begin{lemma}
\label{lem:aboutCores}
For any graph $G$ and any core $X$ of $G$, every component of $G-X$ has at most two edges to $X$. Moreover, if $G$ has at least two cores and is connected, then for any vertex $x$ of $G$, there exists a core $X \subseteq V(G)\setminus \{x\}$ so that $G-X$ is connected.
\end{lemma}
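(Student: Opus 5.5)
We prove the two parts separately, using only Menger's theorem for edge-disjoint paths and the fact that pairwise $3$-edge-connectivity is an equivalence relation whose classes are the cores.

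For the first part, suppose some component $C$ of $G-X$ has at least three edges to $X$. Fix a vertex $c\in V(C)$ and a vertex $x\in X$. We show that $c$ and $x$ are $3$-edge-connected in $G$; then $c$ lies in the same core as $x$, contradicting maximality of $X$ since $c\notin X$. Let $Z$ be any set with $c\in Z$, $x\notin Z$, and suppose $|\delta(Z)|\le 2$. We split into two cases.
\begin{enumerate}
\item If $Z$ contains no vertex of $X$, then all edges from $C$ to $X$ leave $Z$ or leave $C$'s part inside $Z$. More precisely, consider the edges from $C$ to $X$. Each such edge either has its $C$-end outside $Z$, or it crosses $\delta(Z)$. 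Since $C$ is connected, if some $C$-end lies outside $Z$ then a path in $C$ from $c$ to it crosses $\delta(Z)$ with an edge inside $C$. A cleaner way to say this is to contract $X$ to a single vertex $x^*$. The graph $C+x^*$ has at least three edges from $C$ to $x^*$ and $C$ connected, so $c$ is joined to $x^*$ by three edge-disjoint paths. This is because any cut separating $c$ from $x^*$ within $C\cup\{x^*\}$ either contains all $\ge 3$ edges to $x^*$ or splits $C$, and the latter can be charged similarly. Rather than fight this, the plan is a direct count: $\delta(Z)\cap E(C\cup \partial)$ must separate $c$ from $X$ inside the subgraph $C$ plus its edges to $X$.
\item If $Z$ meets $X$ in some vertex $y$, then $Z$ separates $y$ from $x$, so $|\delta(Z)|\ge 3$ because $X$ is a core.
\end{enumerate}
So it suffices to treat $Z\cap X=\emptyset$. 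In that case every path from $c$ to $X$ inside $H:=G[V(C)\cup X]$ restricted to $C$ plus the boundary edges crosses $\delta(Z)$. We then need $\lambda_H(c,X)\ge 3$, where $X$ is contracted to one vertex.

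The obstacle is that $\lambda(c,X)$ could be below $3$ even with $\ge3$ boundary edges, for example if $c$ hangs off $C$ by a bridge. So I will not claim $c$ itself joins $X$. Instead, pick $Z$ to be the vertex set of $C$ minus the side containing $X$, in the following sense: take any minimal cut of size $\le2$ between $C$-vertices and $X$. Choose $W\subseteq V(C)$ with $|\delta(W)|\le 2$ and $W$ minimal so that $W\supseteq$ all $C$-ends of boundary edges reachable... This needs care. The cleaner claim is: \emph{some} vertex of $C$ is $3$-edge-connected to $X$. Take $W=V(C)$: since $|\delta(V(C))|\ge 3$, look at a minimal set $W\subseteq V(C)$ maximal with respect to $|\delta(W)|\ge3$... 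I will argue by submodularity on sets $Z\subseteq V(C)$ with $|\delta(Z)|\le2$. These are closed under union when their complements intersect, by submodularity together with $|\delta|\ge 1$ on nonempty proper sets. Their union is then a set $U\subsetneq V(C)$ with $|\delta(U)|\le 2$ or at most a bounded count. Any vertex $c\in V(C)\setminus U$ is then $3$-edge-connected to $x$, which gives the contradiction.

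For the second part, build the tree structure. Contract each core to a single vertex to get a graph $G'$. The first part, together with maximality, implies that $G'$ is connected with every edge cut between cores of size $\le 2$, so $G'$ is a "cactus-like" graph whose blocks are cycles or bridges. Its leaf blocks give cores $X$ of degree $\le 2$ in $G'$ whose removal leaves $G'$ connected. Since $G'$ has at least two vertices, there are at least two such non-cut vertices in $G'$ (as in any connected graph with $\ge2$ vertices, by taking the ends of a longest path, or leaves of a spanning tree). At most one of them contains $x$, so choose the other as $X$; then $G-X$ is connected because $G'-X$ is connected and each remaining core induces a connected subgraph, being $3$-edge-connected internally via paths that avoid... here care is needed, and I would justify connectivity of $G-X$ directly from connectivity of $G'-X$ and of $G[Y]$ for cores $Y$, proving the latter by the first part's argument.
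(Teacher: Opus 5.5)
\textbf{The first part has a genuine gap.} You correctly retreat to showing that \emph{some} vertex of $C$ is $3$-edge-connected to $X$, and your Case~2 correctly handles sets meeting $X$. But your argument that such a vertex exists rests on the claim that the sets $Z\subseteq V(C)$ with $|\delta(Z)|\le 2$ are closed under union, and that claim is false.

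Submodularity only gives $|\delta(Z_1\cup Z_2)|\le 4-|\delta(Z_1\cap Z_2)|$. This bounds the union by $3$ when $Z_1\cap Z_2\neq\emptyset$, and by $4$ when the sets are disjoint. For example, let $C$ be a path $c_1c_2c_3$ where each $c_i$ has one edge to $X$. Then $|\delta(\{c_1\})|=|\delta(\{c_3\})|=2$, but $|\delta(\{c_1,c_3\})|=4$. So the union $U$ need not have a small cut, and the assertion $U\subsetneq V(C)$ is unsupported. That assertion is the whole content of the first part.

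The missing idea is a way to locate the good vertex. Take a minimal subtree of $C$ containing the $C$-ends of three edges from $C$ to $X$. Let $u$ be a vertex of this tree from which the three terminals are reached by pairwise edge-disjoint tree paths: the branch vertex, or the middle terminal if the tree is a path. Then:
\begin{enumerate}
\item $u$ has three edge-disjoint trails to $X$, so any $Z\ni u$ with $|\delta(Z)|\le 2$ must meet $X$.
\item By your Case~2, such a $Z$ must then contain all of $X$.
\item Hence $u$ is $3$-edge-connected to every vertex of $X$, contradicting the maximality of $X$.
\end{enumerate}
In the example above, $u=c_2$. This is exactly the paper's argument.

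\textbf{The second part uses a false claim.} You assert that every core $Y$ induces a connected subgraph, so this cannot be proved by the first part's argument or any other. Counterexample: if $u$ and $v$ are joined by three internally disjoint paths of length two, then $\{u,v\}$ is a core, but $G[\{u,v\}]$ has no edges.

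What you actually need is weaker, and it does follow from the first part. Each component $K$ of $G-X$ satisfies $|\delta(V(K))|\le 2$. So no core $Y\neq X$ can have vertices in two different components of $G-X$. Hence every component of $G-X$ is a union of cores, and connectivity of $G'-X$ transfers to $G-X$.

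With this repair, your route is a valid alternative to the paper's. You delete a non-cut vertex of the core-contraction $G'$ that avoids $x$. The paper instead picks a core $X\not\ni x$ maximizing the component of $G-X$ containing $x$, and observes that any other component of $G-X$ contains a core that would do better. Note also that the cactus structure of $G'$ is unnecessary: every connected graph with at least two vertices has two non-cut vertices.
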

\begin{proof}
	First of all, suppose towards a contradiction that some component $H$ of $G-X$ has at least three edges to $X$. By considering a minimum subgraph of $H$ which connects the ends of these edges, we can see that there exists a vertex $u \in V(H)$ so that there are three edge-disjoint trails from $u$ to $X$. However, since $u$ is not in the core $X$, there exists a $2$-edge-cut that separates $u$ from some vertex in $X$, and thus from all of $X$, a contradiction.
	
	Now, suppose that $G$ has at least two cores and is connected. Choose a core $X \subseteq V(G)\setminus \{x\}$ so that the component of $G-X$ containing $x$ has as many vertices as possible. If $G-X$ is not connected, then it has a component $H$ which does not contain $x$. Since there are at most two edges of $G$ joining $H$ to $X$, there exists a core $X'$ which is contained in $V(H)$. This core $X'$ contradicts our choice of $X$.
\end{proof}

Now, for convenience, we say that an \emph{instance} is a tuple $\mathcal{G}=(G, a, b, C)$ so that $G$ is a graph, $a$ and $b$ are vertices of $G$ (possibly with $a=b$), and $C$ is an Eulerian trail of $G$ from $a$ to $b$. We say that an instance (or valid instance)  is \emph{labeled} if it additionally contains a group-labeling $\gamma$ of the graph. Let $\mathcal{G}=(G, a, b, C)$ be an instance, possibly with a labeling $\gamma$, and let $X$ be a core of $G$. We now show how to \emph{obtain} a valid instance $(H, a', b')$ \emph{from} $X$, as well as an associated labeling $\gamma_H$ of $H$ from $\gamma$.

First of all, the graph $H$ has vertex-set $X$ and includes all edges $e$ of $G$ with both ends in $X$; we set $\gamma_H(\vec{e})=\gamma(\vec{e})$. Next, for each subtrail $T$ of $C$ whose first and last arcs have underlying edges in $\delta(X)$ and which begins and ends in $X$ but has no internal vertices in $X$, we add an edge $e$ to $H$ so that $\vec{e}$ is oriented from the tail of $T$ to the head of $T$; we set $\gamma_H(\vec{e})=\gamma(T)$. This completes the definition of $H$ and $\gamma_H$. Finally, we let $a'$ be the first vertex of $X$ which is hit by $C$, and we let $b'$ be the last vertex of $X$ which is hit by $C$. 
See Figure~\ref{fig:validInstanceObtainedFromCore} for example.

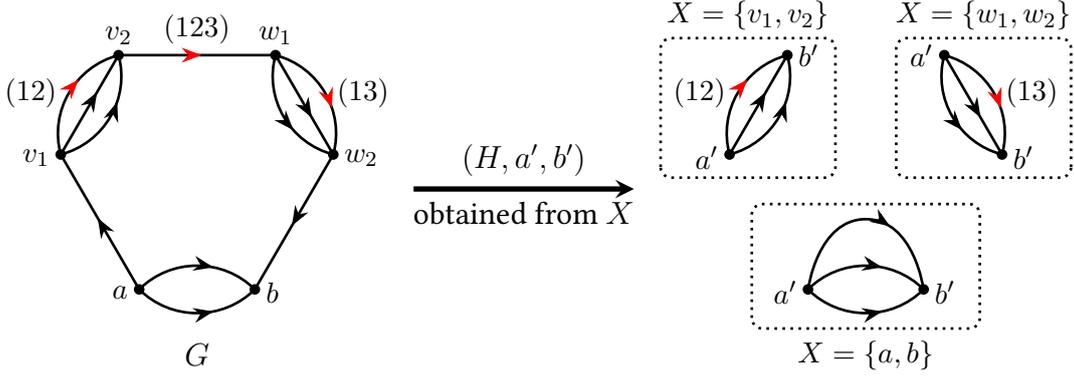
\begin{figure}
\centering
\begin{tikzpicture}
    \begin{scope}
        \coordinate (a) at (-90-25:1.8);
        \coordinate (b) at (-90+25:1.8);

        \coordinate (x1) at (150+25:1.8);
        \coordinate (x2) at (150-25:1.8);

        \coordinate (y1) at (30+25:1.8);
        \coordinate (y2) at (30-25:1.8);

        \draw[line width=1pt, midarrow] (a) to (x1);
        \draw[line width=1pt, midarrowRed] (x2) to node[above]{\small $(123)$} (y1);
        \draw[line width=1pt, midarrow] (y2) to (b);
        \draw[line width=1pt, midArrow] (a) to[bend left=45] 
        (b);
        \draw[line width=1pt, midArrow] (a) to[bend right=45] (b);
        \draw[line width=1pt, midArrowRed] (x1) to[bend left=45] node[left]{\small $(12)$} (x2);
        \draw[line width=1pt, midArrow] (x1) to[bend left=0] (x2);
        \draw[line width=1pt, midArrow] (x1) to[bend right=45] (x2);
        \draw[line width=1pt, midArrowRed] (y1) to[bend left=45] node[right]{\small $(13)$} (y2);
        \draw[line width=1pt, midArrow] (y1) to[bend left=0] (y2);
        \draw[line width=1pt, midArrow] (y1) to[bend right=45] (y2);

        \fill[fill=black] (a) circle (0.07);
        \fill[fill=black] (b) circle (0.07);
        \fill[fill=black] (x1) circle (0.07);
        \fill[fill=black] (x2) circle (0.07);
        \fill[fill=black] (y1) circle (0.07);
        \fill[fill=black] (y2) circle (0.07);

        \node at (a) [left, yshift=-0.4mm] {\small $a$};
        \node at (b) [right, yshift=-0.4mm] {\small $b$};
        \node at (x1) [left, yshift=-0.4mm] {\small $v_1$};
        \node at (x2) [above, xshift=0mm] {\small $v_2$};
        \node at (y1) [above, xshift=0mm] {\small $w_1$};
        \node at (y2) [right, yshift=-0.4mm] {\small $w_2$};

        \node at (0,-2.5) {$G$};
    \end{scope}
    \begin{scope}[xshift=4.3cm, yshift=-0.3cm]
        \draw[-stealth, line width=2pt]
            (-1.45,0) -- node[above] {$(H,a',b')$} node[below] {obtained from $X$} (1.45,0);
    \end{scope}
    \begin{scope}[xshift=8.8cm, yshift=0cm]
        \coordinate (a) at (-90-25:1.8);
        \coordinate (b) at (-90+25:1.8);
        \coordinate (virtual) at (90:-0.7);

        \draw[line width=1pt, midArrow] (a) to[bend left=45] (b);
        \draw[line width=1pt, midArrow] (a) to[bend right=45] (b);
        \draw[line width=1pt, midArrow] plot [smooth,tension=1.5] coordinates{(a) (virtual) (b)};

        \fill[fill=black] (a) circle (0.07);
        \fill[fill=black] (b) circle (0.07);

        \node at (a) [left, yshift=-0.4mm] {\small $a'$};
        \node at (b) [right, yshift=-0.4mm] {\small $b'$};

        \draw[rounded corners, line width=1pt, dotted] (-1.5,-2.15) rectangle (1.5,-0.5);

        \node at (0,-2.5) {\small $X=\{a,b\}$};
    \end{scope}
    \begin{scope}[xshift=8.8cm, yshift=0cm]
        \coordinate (x1) at (150+25:1.8);
        \coordinate (x2) at (150-25:1.8);
        \coordinate (virtual) at (-30:-0.5);

        \draw[line width=1pt, midArrowRed] (x1) to[bend left=45] node[left]{\small $(12)$} (x2);
        \draw[line width=1pt, midArrow] (x1) to[bend left=0] (x2);
        \draw[line width=1pt, midArrow] (x1) to[bend right=45] (x2);
        \node at (x1) [left, yshift=-0.4mm] {\small $a'$};
        \node at (x2) [right, xshift=0mm] {\small $b'$};

        \fill[fill=black] (x1) circle (0.07);
        \fill[fill=black] (x2) circle (0.07);


        \draw[rounded corners, line width=1pt, dotted] (-2.7,-0.15) rectangle (-0.4,1.7);

        \node at (-1.55,2.0) {\small $X=\{v_1,v_2\}$};
    \end{scope}
    \begin{scope}[xshift=8.8cm, yshift=0cm]
        \coordinate (y1) at (30+25:1.8);
        \coordinate (y2) at (30-25:1.8);
        \coordinate (virtual) at (210:-0.5);

        \draw[line width=1pt, midArrowRed] (y1) to[bend left=45] node[right]{\small $(13)$} (y2);
        \draw[line width=1pt, midArrow] (y1) to[bend left=0] (y2);
        \draw[line width=1pt, midArrow] (y1) to[bend right=45] (y2);

        \fill[fill=black] (y1) circle (0.07);
        \fill[fill=black] (y2) circle (0.07);

        \node at (y1) [left, xshift=0mm] {\small $a'$};
        \node at (y2) [right, yshift=-0.4mm] {\small $b'$};

        \draw[rounded corners, line width=1pt, dotted] (2.7,-0.15) rectangle (0.4,1.7);

        \node at (1.55,2.0) {\small $X=\{w_1,w_2\}$};
    \end{scope}
\end{tikzpicture}
\caption{An instance $(G,a,b,C)$ is illustrated on the left, where $C$ is an arbitrary Eulerian trail of $G$ from $a$ to $b$. All edges are labeled by the identity element except for the red arrowed arcs labeled by some non-identity elements in the symmetric group $\mathfrak{S}_3$. The cores of $G$ are $\{a,b\}$, $\{v_1,v_2\}$, and $\{w_1,w_2\}$. The valid instances $(H,a',b')$ obtained from these cores are illustrated on the right; we note that the precise choice of $C$ does not matter exactly because every Eulerian trail from $a$ to $b$ has the same label.}
\label{fig:validInstanceObtainedFromCore}
\end{figure}

We now prove that $(H, a', b')$ is actually a valid instance.

\begin{lemma}
\label{lem:isValid}
Let $\mathcal{G}=(G, a, b, C)$ be an instance, and let $X$ be a core of $G$. Then the tuple $(H, a', b')$ obtained from $X$ is a valid instance.
\end{lemma}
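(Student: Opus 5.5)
We need to show two things: $H$ is $3$-edge-connected, and $H$ has an Eulerian trail from $a'$ to $b'$. The Eulerian trail is the easy part, and I would handle it first. Restrict $C$ to the moments it is in $X$. Each maximal excursion of $C$ outside $X$ that starts and ends in $X$ has been replaced by a single new edge of $H$ with the same ends. The edges of $C$ with both ends in $X$ are kept as they are.

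Reading $C$ from its first visit to $X$ (at $a'$) to its last visit (at $b'$) therefore gives a trail in $H$ that uses every edge of $H$ exactly once. The initial segment of $C$ before $a'$ and the final segment after $b'$ lie outside $X$ except at their endpoints, so they contribute no edges of $H$. Every edge of $H$ is either an edge inside $X$ or one of these excursions, so this trail is Eulerian from $a'$ to $b'$. If $C$ never enters $X$, then $G$ has one vertex, which is trivial. One small check is needed: a subtrail $T$ as in the definition consists of at least two arcs and exits and re-enters $X$. The whole excursion between consecutive visits is exactly such a $T$, so each excursion is captured exactly once.

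For $3$-edge-connectivity, suppose $\emptyset\neq Y\subsetneq X$ with $|\delta_H(Y)|\le 2$. Take $u\in Y$ and $v\in X\setminus Y$. Since $X$ is a core, there are three edge-disjoint $u$–$v$ paths in $G$. I will map each of them to a trail in $H$ and obtain three edge-disjoint $u$–$v$ walks in $H$, a contradiction.

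A path $P$ in $G$ decomposes into pieces inside $X$ and excursions through components of $G-X$. By Lemma~\ref{lem:aboutCores}, each component $K$ of $G-X$ has at most two edges to $X$. An excursion of $P$ through $K$ therefore uses exactly two edges $e,e'$ between $K$ and $X$, and then $K$ has no other edges to $X$. Because $C$ is Eulerian, it must traverse both $e$ and $e'$. These are the only two edges between $K$ and $X$, so $C$ enters $K$ along one and leaves along the other in a single excursion. That excursion is an edge $h_K$ of $H$ joining the endpoints of $e,e'$ in $X$. Replacing each excursion of $P$ by the corresponding $h_K$ yields a walk in $H$ from $u$ to $v$.

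Distinct paths use distinct pairs $\{e,e'\}$, so they are sent to distinct edges $h_K$, and the three walks are edge-disjoint in $H$. This gives three edge-disjoint $u$–$v$ walks in $H$, which contradicts $|\delta_H(Y)|\le 2$.

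The main obstacle is this correspondence between excursions of $G$-paths and edges of $H$. One needs that each component of $G-X$ entered by a path is a "two-attachment" component traversed by $C$ in a single excursion, and that distinct paths give distinct $H$-edges. Lemma~\ref{lem:aboutCores} handles exactly this. I would also double-check the degenerate case where the component contains $a$ or $b$. There $C$ may begin or end inside $K$ and so may not pass straight through. But if $K$ has exactly two edges to $X$ and contains $a$, then parity forces $K$ to contain $b$ as well, and the cut counts still work. I expect only a short case check here.
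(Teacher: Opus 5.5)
Your Eulerian-trail argument is fine, and your path-lifting argument is correct for every component of $G-X$ that $C$ traverses in a single excursion. The gap is the case you defer at the end, and that case is exactly where the lemma has content. Suppose the component $K$ of $G-X$ containing $a$ has two edges $xk_1$, $yk_2$ to $X$ with $x\neq y$ (so $b\in V(K)$ by parity). Then $C$ starts in $K$, leaves along one of these edges, and comes back along the other only to finish at $b$. So \emph{no} edge of $H$ corresponds to $K$; this happens, for example, for the core $\{v_1,v_2\}$ in Figure~\ref{fig:validInstanceObtainedFromCore}. Now let $x\in Y$, $y\notin Y$, and suppose one of your three Menger paths runs through $K$. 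That path has no image in $H$, and its only crossing from $Y$ to $X\setminus Y$ may be inside $K$. You are then left with two edge-disjoint $u$--$v$ walks in $H$, which does not contradict $|\delta_H(Y)|\le 2$. Menger's theorem in $G$, with only the bound $\lambda_G(u,v)\ge 3$ coming from the core, gives you no control over whether a path enters $K$. So ``the cut counts still work'' is asserted precisely at the step that needs proof.

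The missing ingredient is parity. Every vertex other than $a$ and $b$ has even degree, so $|\delta_G(Z)|$ is even whenever $Z$ contains neither $a$ nor $b$. Hence such a $Z$ separating two vertices of $X$ has $|\delta_G(Z)|\ge 4$. The paper applies this to the lifted set $Y'$, consisting of $Y$ together with every component of $G-X$ whose attachments all lie in $Y$. In the problematic case $K\not\subseteq Y'$, so $a,b\notin Y'$ and $|\delta_H(Y)|=|\delta_G(Y')|-1\ge 3$. In all other cases $|\delta_H(Y)|=|\delta_G(Y')|\ge 3$ directly. If you want to keep your path-lifting framework, the same fact shows that every $u$--$v$ cut $Z$ in $G-V(K)$ has at least three edges. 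If $Z$ contains neither $x$ nor $y$, nothing changes. If $Z$ contains exactly one of them, then $Z$ avoids $a,b$, so $|\delta_G(Z)|\ge 4$, and deleting $K$ removes only one cut edge. If $Z$ contains both, use $Z\cup V(K)$ instead. Taking the three Menger paths in $G-V(K)$, your lifting then goes through unchanged.
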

\begin{proof}
	We can see that $H$ contains an Eulerian trail from $a'$ to $b'$ by considering the maximal subtrail of $C$ whose tail is $a'$ and whose head is $b'$. 
    
    Now suppose towards a contradiction that there exists a proper, non-empty subset $Y$ of $V(H)$ such that $|\delta_H(Y)|\leq 2$. By Lemma~\ref{lem:aboutCores}, each component $K$ of $G-X$ has at most two neighbors to $X$. Let $Y'$ be the union of $Y$ and the vertex-sets of any component $K$ of $G-X$ with all of its neighbors in $Y$. Note that if $K$ is a component of $G-X$ with two distinct neighbors $x$ and $y$ so that $K$ does not contain both $a$ and $b$, then we add an edge between $x$ and $y$ to $H$. (Such a component cannot contain exactly one of $a$ and $b$ since every edge-cut separating $a$ and $b$ in $G$ is odd.) So, if no component $K$ of $G-X$ contains both $a$ and $b$ and has one neighbor in $Y$ and one neighbor in $V(H)\setminus Y$, then we have $|\delta_H(Y)|=|\delta_G(Y')|$. Moreover, $|\delta_G(Y')| \geq 3$ since the vertices in $X$ are pairwise $3$-edge-connected.

    So we may assume that there is such a component $K$. Then $|\delta_H(Y)|=|\delta_G(Y')|-1$, and $|\delta_G(Y')|$ is even since it does not contain $a$ or $b$. Then $|\delta_G(Y')| \geq 4$ since the vertices in $X$ are pairwise $3$-edge-connected. This completes the proof.
\end{proof}

We are now ready to state the full characterization. 

\begin{theorem}
\label{thm:fullTechnical}
For any instance $\mathcal{G}=(G, a,b,C)$ and labeling $\gamma$, the following are equivalent.\begin{enumerate}
\item\label{itm:cond1} Every Eulerian trail of $G$ from $a$ to $b$ has the same $\gamma$-label.

\item\label{itm:cond2} There exists a shifting $\gamma'$ of $\gamma$ so that for each core $X$, the valid instance $(H, a', b')$ obtained from $X$ and the labeling $\gamma_H$ obtained from $\gamma'$ satisfy $\langle H, \gamma_H\rangle \cong \mathbb{Z}_2^k$ for some~$k$.
\end{enumerate}
Moreover, if neither of these conditions holds, then $G$ has a circuit $L$ so that $\gamma(L) \neq \gamma(L^{-1})$ and there exists an Eulerian trail from $a$ to $b$ which has $L$ as a subcircuit.
\end{theorem}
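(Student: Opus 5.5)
We prove the cycle (1)$\Rightarrow$(``moreover'' fails, i.e.\ no feasible circuit $L$ with $\gamma(L)\neq\gamma(L^{-1})$)$\Rightarrow$(2)$\Rightarrow$(1). The first implication is immediate: if $T_1LT_2$ is an Eulerian trail from $a$ to $b$ with $\gamma(L)\neq\gamma(L^{-1})$, then $T_1L^{-1}T_2$ is another Eulerian trail with a different label. It then suffices to show (2)$\Rightarrow$(1), and that if (2) fails then a feasible circuit $L$ of $G$ with $\gamma(L)\neq \gamma(L)^{-1}$ exists.

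\textbf{(2)$\Rightarrow$(1).} By the remark after Lemma~\ref{lem:basicProperties}, we may replace $\gamma$ by $\gamma'$. We induct on the number of cores. If $G$ has one core, then $H=G$, $\gamma_H=\gamma'$, and the claim is trivial since the group is $\mathbb{Z}_2^k$. Otherwise, apply Lemma~\ref{lem:aboutCores} with $x=a$ to obtain a core $X\not\ni a$ with $G-X$ connected. By Lemma~\ref{lem:aboutCores}, $|\delta(X)|\le 2$, and since $a\notin X$, parity gives $b\notin X$ unless $|\delta(X)|$ is odd; hence $|\delta(X)|=2$ (or $X$ holds $b$ with $|\delta(X)|=1$). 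Every Eulerian trail $C'$ from $a$ to $b$ enters $X$ once, traverses an Eulerian trail of $G[X]$ between two fixed vertices, and leaves. Contract this visit into a single edge (or, if $b\in X$, a pendant edge) labeled by the trail of $G[X]$. The resulting Eulerian trail in $(H,a',b')$ for $X$ has label in $\mathbb{Z}_2^k$ equal to the product of all labels, independent of $C'$. Here we must check that the entry and exit points of $X$ and the $H$ obtained do not depend on the choice of Eulerian trail, which holds since they are determined by $\delta(X)$. Replacing $X$ by one edge with this fixed label gives a smaller instance whose cores, and the instances obtained from them, coincide with the old ones. Induction then applies.

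\textbf{Not (2)$\Rightarrow$ feasible bad circuit.} Apply Proposition~\ref{prop:3edgeConn} to each core's valid instance $(H,a',b')$ from Lemma~\ref{lem:isValid}. If some core gives outcome~1, take a feasible circuit $L_H$ of $H$ with $\gamma_H(L_H)\neq\gamma_H(L_H^{-1})$. Expand each virtual edge of $L_H$ back into its subtrail of $C$ to get a circuit $L$ of $G$ with the same label, and splice the remaining parts of $C$ outside $X$ to show that $L$ is feasible in $G$. The splicing works because the expansion of an Eulerian trail of $H$ from $a'$ to $b'$, together with the pieces of $C$ before $a'$ and after $b'$, is an Eulerian trail of $G$. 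Otherwise every core admits a shifting making its group $\mathbb{Z}_2^{k}$. Next combine these shiftings into a single global shifting $\gamma'$. Shiftings at vertices of distinct cores commute, but a virtual edge's label also depends on shifts at vertices outside $X$, through the conjugation along the subtrail. Because the subtrail starts and ends in $X$, however, its label changes only by shifts at its endpoints, which lie in $X$. Shifting vertex by vertex therefore realizes all the per-core shiftings at once, giving (2).

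\textbf{Main obstacle.} The delicate step is (2)$\Rightarrow$(1). The difficulty is showing that the instance obtained from each core, and particularly the labels of virtual edges, is independent of the choice of Eulerian trail $C$. Each virtual edge label is the label of an Eulerian trail of a region behind a $2$-edge-cut, which must itself be determined. The core-by-core induction above is arranged so that these regions are handled in an order where their labels are already fixed.
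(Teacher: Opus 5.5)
Your argument is correct and uses the same ingredients as the paper (Proposition~\ref{prop:3edgeConn}, Lemmas~\ref{lem:aboutCores} and~\ref{lem:isValid}, and contraction of a leaf core), but you organize them differently.

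\textbf{Comparison with the paper.} The paper runs one induction on the number of cores. It removes a core $X$ with $b\notin X$ and $|\delta(X)|\le 2$, and observes that all Eulerian trails of $G$ share a label if and only if this holds both for $\hat G$ and for $(H,a',b')$. It then applies Proposition~\ref{prop:3edgeConn} to $H$ and induction to $\hat G$, so both directions and the ``moreover'' part come out of the same step.

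You use the peeling only for (2)$\Rightarrow$(1), where only the easy half of that equivalence is needed. You then handle ``not (2) $\Rightarrow$ bad feasible circuit'' with no induction at all:
\begin{enumerate}
\item apply Proposition~\ref{prop:3edgeConn} to every core's instance at once;
\item if some core gives the first outcome, lift its bad feasible circuit to $G$ by expanding virtual edges and splicing in the parts of $C$ before $a'$ and after $b'$;
\item otherwise, glue the per-core shiftings, using the telescoping fact that a virtual edge's label depends only on the shifts at its two ends.
\end{enumerate}
This makes explicit the gluing step that the paper states in one sentence. It also shows directly that the circuit $L$ is the expansion of a bad circuit from a single core's instance. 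The paper's route, in exchange, gets everything from one uniform induction.

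\textbf{Two details to fix in (2)$\Rightarrow$(1).} Let $\lambda$ be the common label of the Eulerian trails of $G[X]$.

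First, if $b\in X$, do not replace $X$ by a pendant edge to a new vertex. That vertex is a new singleton core, so the number of cores does not drop, and nothing changes at all when $X=\{b\}$ has no loops. Instead, delete $X$ and make the outer end $u$ of the unique edge $f\in\delta(X)$ the new terminal. Absorb the fixed element $\gamma'(\vec f)\lambda$ as a suffix, with $\vec f$ oriented into $X$. This mirrors what the paper does when $a\in X$.

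Second, if $\delta(X)=\{f_1,f_2\}$, the new edge must carry the label $\gamma'(\vec f_1)\lambda\gamma'(\vec f_2)$ of the whole subtrail of $C$, not just $\lambda$; here $\vec f_1,\vec f_2$ are oriented as $C$ traverses them. Also, the entry and exit vertices of $X$ are fixed only as an unordered pair, since different trails may cross $X$ in opposite directions. This is harmless precisely because $\lambda=\lambda^{-1}$ in $\mathbb{Z}_2^k$. So a trail crossing $X$ in the opposite direction picks up exactly the inverse label of the new edge.
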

\begin{proof}
    We prove the theorem by induction on the number of cores of $G$. For the base case, if there is only one core, then the theorem holds by Proposition~\ref{prop:3edgeConn}.

    Thus we may assume that there are at least two cores. So by Lemma~\ref{lem:aboutCores} (note that $G$ is connected since it has an Eulerian trail from $a$ to $b$), there exists a core $X$ of $G$ so that $b \notin X$ and $|\delta_G(X)|\leq 2$. Let $(H, a', b')$ and $\gamma_H$ be obtained from $X$ and $\gamma$. By Lemma~\ref{lem:isValid}, the tuple $(H, a', b')$ is a valid instance. It follows from Proposition~\ref{prop:3edgeConn} that every Eulerian trail from $a'$ to $b'$ in $H$ has the same $\gamma_H$-label if and only if there exists a shifting $\gamma_H'$ of $\gamma_H$ such that $\langle H, \gamma_H\rangle \cong \mathbb{Z}_2^k$ for some $k$. Moreover, if these conditions do not hold, then $(H, a', b')$ has a feasible circuit $L$ whose label has order greater than~$2$.
    
    We now define a new valid instance $\hat{\mathcal{G}}=(\hat{G}, \hat{a}, b, \hat{C})$ and labeling $\hat{\gamma}$ of $\hat{G}$. If $X$ contains $a$, then $|\delta(X)|=1$ and we let $\hat{a}$ be the unique neighbor of $X$. We also let $\hat{G}$ be obtained from $G$ by deleting $X$, and we let $\hat{C}$ be obtained from $C$ by deleting all arcs up to and including the arc from $\delta(X)$. If on the other hand $X$ does not contain $a$, so $|\delta(X)| = 2$, then we set $\hat{a}=a$, and we let $\hat{\mathcal{G}}$ be obtained from $\mathcal{G}$ as follows. First, let $T$ be the subtrail of $C$ whose first and last arcs are in $\delta(X)$ and which begins and ends in $V(G)\setminus X$ and has internal vertices in $X$. Then $\hat{C}$ is obtained from $C$ by replacing $T$ with a new arc from the tail of $T$ to the head of $T$. Likewise, $\hat{G}$ is obtained from $G$ by deleting $X$ and adding a new edge $e$ so that $\vec{e}$ is oriented from the tail of $T$ to the head of $T$; we set $\hat{\gamma}(e)=\gamma(T)$.

    Note that the cores of $\hat{G}$ are exactly the cores of $G$ except that $X$ is not included. (To see this, observe that every pair of vertices in $\hat{G}$ has the same edge-connectivity as in $G$.) So by induction, the two conditions are equivalent for $\hat{\mathcal{G}}$ with labeling $\hat{\gamma}$. Also, notice that every Eulerian trail of $G$ from $a$ to $b$ has the same $\gamma$-label if and only if both: 1) every Eulerian trail of $\hat{G}$ from $\hat{a}$ to $b$ has the same $\hat{\gamma}$-label and 2) every Eulerian trail of $H$ from $a'$ to $b'$ has the same $\gamma_H$-label. Also, note that each core of $\hat{G}$ yields the same valid instance with respect to $\hat{\mathcal{G}}$ as it does with respect to $\mathcal{G}$ (since the relevant subtrails of $\hat{C}$ yield subtrails of $C$ with the same ends). Likewise, shifting at vertices in a core $Y$ does not change the labeling of the valid instance of any other core besides $Y$. Finally, if any one core has a feasible circuit $L$, then $L$ is also a subcircuit of an Eulerian trail from $a$ to $b$ in $G$. The theorem follows.
\end{proof}


\section{The algorithm}
\label{sec:algorithm}

We first give a simple algorithm for testing whether every Eulerian trail from $a$ to $b$ has the same label in a 3-edge-connected group-labeled graph utilizing Proposition~\ref{prop:3edgeConn}. We then give the algorithm for general group-labeled graphs using Theorem~\ref{thm:fullTechnical}. Finally, we use this algorithm for the decision problem to actually find the Eulerian trails of different labels if they exist. Throughout this section we assume that our graph is labeled over a finitely generated group $\Gamma$ and we have access to an oracle which solves the {word problem} over $\Gamma$ in time $\phi(n)$ where $n$ is the length of the input word.

We also assume that the $\Gamma$-labeled graph $(G,\gamma)$ taken as input to the algorithms below is stored as an adjacency list, that from an edge $e$ we can access the labels of $\vec{e}$ and $\vec{e}^{-1}$, and that if $\gamma(\vec{e})$ is the word $\alpha_1\alpha_2 \ldots \alpha_t$, then $\gamma(\vec{e}^{-1})$ is the formal \emph{inverse word} $\alpha_t^{-1}\ldots \alpha_2^{-1} \alpha_1^{-1}$. For any arbitrary way of choosing an orientation $\vec{e}$ of each edge $e$, we define the \emph{total word length} of $(G, \gamma)$ to be the sum over all edges $e \in E(G)$ of the word length of~$\gamma(\vec{e})$. When we say that an algorithm \emph{returns a shifting} $\gamma'$ of $\gamma$, we mean that in addition to having the new label of every arc, we also know, for each vertex $v$, the word $\alpha_v$ which is the group element we shifted by. (Recall that any shifting can be specified in this manner.) The \emph{difference} of $\gamma'$ from $\gamma$ is the maximum length of one of these words~$\alpha_v$.


We first show how to ``normalize'' group-labelings. For a spanning tree $T$ of a connected group-labeled graph $(G, \gamma)$, a shifting $\gamma'$ is \emph{$T$-normalized} if $\gamma'(\vec{e}) = 1$ for all edges $e$ in~$T$. We prove that any two $T$-normalized shiftings are ``conjugate'' in the following sense.

\begin{lemma}
\label{lem:normalizedShifting1}
    Let $(G,\gamma)$ be a connected group-labeled graph, and let $T$ be a spanning tree of $G$. If $\gamma'$ and $\gamma''$ are $T$-normalized shiftings of $\gamma$, then there exists a group element $\alpha$ such that $\alpha \gamma'(\vec{e}) \alpha^{-1} = \gamma''(\vec{e})$ for all edge $e \in E(G)$.
\end{lemma}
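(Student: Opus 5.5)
Since $\gamma'$ and $\gamma''$ are both shiftings of $\gamma$, and shiftings compose, $\gamma''$ is a shifting of $\gamma'$. By the observation in the proof of Lemma~\ref{lem:basicProperties}, shiftings at distinct vertices commute, and repeated shiftings at one vertex combine into a single one. So there are group elements $\alpha_v$, one for each $v \in V(G)$, such that shifting $\gamma'$ by $\alpha_v$ at each $v$ yields $\gamma''$.

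Concretely, for every non-loop arc $\vec{e}$ with tail $u$ and head $w$ we have $\gamma''(\vec{e}) = \alpha_u \gamma'(\vec{e}) \alpha_w^{-1}$. For a loop at $v$ we have $\gamma''(\vec{e}) = \alpha_v \gamma'(\vec{e})\alpha_v^{-1}$. This formula holds regardless of the order in which the shiftings are performed, and I would verify it directly from the definition.

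Next I use normalization. For each edge $e$ of $T$ with ends $u$ and $w$, both $\gamma'(\vec{e})$ and $\gamma''(\vec{e})$ equal $1$. Hence $1 = \alpha_u \cdot 1 \cdot \alpha_w^{-1}$, so $\alpha_u = \alpha_w$. Since $T$ is a spanning tree of the connected graph $G$, propagating this equality along paths in $T$ shows that all the $\alpha_v$ equal one common element $\alpha$.

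Substituting into the formula gives $\gamma''(\vec{e}) = \alpha\gamma'(\vec{e})\alpha^{-1}$ for every edge, loops included, which is the claim. The only point needing care is the reduction of an arbitrary sequence of shiftings to one element per vertex. Shiftings at the same vertex compose: shifting by $\alpha$ and then by $\beta$ is shifting by $\beta\alpha$. Together with commutativity at distinct vertices, this makes the reduction routine.
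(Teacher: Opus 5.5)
Your proof is correct and is essentially the paper's argument. You write $\gamma''$ as a shifting of $\gamma'$ by one element $\alpha_v$ per vertex, use that tree edges are labeled $1$ under both labelings to force all $\alpha_v$ to coincide, and read off the conjugation. Your explicit rule for combining repeated shifts at a vertex spells out a detail the paper leaves implicit. So does the unstated fact that a shift by $\alpha$ is undone by a shift by $\alpha^{-1}$, which is what actually makes $\gamma''$ a shifting of $\gamma'$ (composition alone does not give this).
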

\begin{proof}
    Because $\gamma'$ and $\gamma''$ are shiftings of the common function $\gamma$, we have that $\gamma''$ is a shifting of $\gamma'$. Note that for distinct vertices $u$ and $v$, it does not matter whether we first shift at $u$ or first shift at $v$. Thus we may assume that $\gamma''$ is obtained from $\gamma'$ by shifting by some element, say $\alpha_v$, at each vertex $v$. 
    
    Fix any vertex $u$, and let $\alpha := \alpha_u$. For any vertex $v$, there is a unique path $P$ in $T$ from $u$ to $v$. Because $\gamma'(\vec{e}) = \gamma''(\vec{e}) = 1$ for all edges $e$ in $P$, we deduce that $\alpha_v = \alpha$. Therefore, $\gamma''$ is obtained from $\gamma'$ by shifting by $\alpha$ at every vertex, and thus $\alpha \gamma'(\vec{e}) \alpha^{-1} = \gamma''(\vec{e})$ for all edges $e$ of $G$, as desired.
\end{proof}

Next we show how to go from an arbitrary group-labeling to a normalized one.

\begin{lemma}
\label{lem:normalizedShifting2}
    Let $(G,\gamma)$ be a connected group-labeled graph of total word length $\ell$, and let $T$ be a spanning tree of $G$. Then in time $\mathcal{O}(\ell|E(G)|)$ we can find a $T$-normalized shifting $\gamma'$ of $\gamma$ of difference at most $\ell$ from $\gamma$ such that $\langle G, \gamma' \rangle$ is a subgroup of $\langle G, \gamma \rangle$.
\end{lemma}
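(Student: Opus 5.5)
We root $T$ at an arbitrary vertex $r$ and process the vertices in breadth-first (or depth-first) order along $T$. We shift by the identity at $r$, i.e.\ we set $\alpha_r = 1$. When we reach a vertex $v$ with parent $u$ via a tree edge $e$ with $\vec{e}$ oriented from $u$ to $v$, we set $\alpha_v := \alpha_u \gamma(\vec{e})$, written as a formal word by concatenating the word $\alpha_u$ with the word $\gamma(\vec{e})$; if the tree edge is stored in the other orientation, we use the formal inverse word instead. The shifting $\gamma'$ is then defined by shifting at every vertex $v$ by $\alpha_v^{-1}$. Concretely, we set $\gamma'(\vec{f}) := \alpha_x \gamma(\vec{f}) \alpha_y^{-1}$ for every arc $\vec{f}$ from $x$ to $y$, stored as the concatenated word.

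Correctness follows directly. For a tree arc $\vec{e}$ from $u$ to $v$ we get $\gamma'(\vec{e}) = \alpha_u\gamma(\vec{e})\gamma(\vec{e})^{-1}\alpha_u^{-1} = 1$, so $\gamma'$ is $T$-normalized. Each $\alpha_v$ is the $\gamma$-label of the tree path from $r$ to $v$, and hence lies in $\langle G,\gamma\rangle$. Therefore every $\gamma'(\vec{f})$ is a product of elements of $\langle G,\gamma\rangle$, which gives $\langle G,\gamma'\rangle \le \langle G,\gamma\rangle$.

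For the difference bound, the word $\alpha_v$ is the concatenation of the labels of the distinct edges on the tree path from $r$ to $v$. Its length is therefore at most the sum of the word lengths of all edges, which is $\ell$. This sum is independent of orientation, since an inverse word has the same length.

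For the running time, computing all the $\alpha_v$ by traversal costs $\mathcal{O}(|V(G)|\cdot \ell)$, since each step copies a word of length at most $\ell$. Writing each new arc label costs $\mathcal{O}(\ell)$ per edge, since each such label has length at most $2\ell + |\gamma(\vec{f})| \le 3\ell$. As $G$ is connected, $|V(G)| \le |E(G)|+1$, so the total is $\mathcal{O}(\ell|E(G)|)$. The only delicate point is the bookkeeping of orientations when edges are stored in the adjacency list in the opposite direction. This is handled by using the stored inverse words, and it does not affect lengths.
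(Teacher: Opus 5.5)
Your proposal is correct and is essentially the paper's proof: the paper shifts vertices one at a time in breadth-first order so that each tree arc to the parent becomes $1$, and the cumulative shift at $v$ is exactly your $\alpha_v$ (the $\gamma$-label of the tree path from the root), which gives the same $\gamma'$ and the same length and running-time bounds. One small slip: under the paper's convention (shifting by $\alpha$ at $v$ prepends $\alpha$ to arcs with tail $v$), your explicit formula $\gamma'(\vec{f})=\alpha_x\gamma(\vec{f})\alpha_y^{-1}$ comes from shifting by $\alpha_v$, not $\alpha_v^{-1}$, at each $v$; this is harmless, since the formula is what you actually use and $\alpha_v$ and $\alpha_v^{-1}$ have the same word length.
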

\begin{proof}
    We construct a $T$-normalized shifting as follows. Fix an arbitrary root of $T$, and proceed from the root to the leaves in a breadth-first fashion. We do not shift at the root vertex. For each other vertex, in order, we shift by the label of the arc to its parent so that that arc now has label 1. Because we only ever shift by elements of $\langle G, \gamma\rangle$, the resulting shifting $\gamma'$ satisfies that $\langle G, \gamma'\rangle$ is a subgroup of $\langle G, \gamma \rangle$.
    
    Each time we shift at a vertex $v$, the length of the word $\alpha_v$ which we shift by is the sum of the word lengths of arcs on the path from $v$ to the root of $T$. Thus we only ever shift by a word of length at most $\ell$. The labels on the arcs can be written in time $\mathcal{O}(\ell|E(G)|)$ considering that it may take $\mathcal{O}(\ell)$ time to write the label of a single arc.
\end{proof}

Using the above lemmas, we show that to check whether $(G,\gamma)$ has a shifting $\gamma'$ such that $\langle G, \gamma'\rangle \cong \mathbb{Z}_2^k$ for some $k$, it suffices to check an arbitrary $T$-normalized shifting.

\begin{lemma}
\label{lem:normalizedShifting3}
    Let $(G, \gamma)$ be a connected group-labeled graph, and let $T$ be a spanning tree of $G$.
    If there exists a shifting $\gamma'$ of $\gamma$ such that $\langle G, \gamma' \rangle \cong \mathbb{Z}_2^k$ for some $k\in \mathbb{N}$, then for any $T$-normalized shifting $\gamma''$ of $\gamma$, we have $\langle G, \gamma'' \rangle \cong \mathbb{Z}_2^{r}$ for some $r \in \mathbb{N}$ with $r \leq k$.
\end{lemma}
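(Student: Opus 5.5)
Start from $\gamma'$ with $\langle G,\gamma'\rangle\cong\mathbb{Z}_2^k$ and pass to a $T$-normalized shifting that stays inside this group. Then use Lemma~\ref{lem:normalizedShifting1} to transfer the conclusion to the arbitrary $T$-normalized shifting $\gamma''$.

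\textbf{Step 1.} Apply the construction of Lemma~\ref{lem:normalizedShifting2} to $(G,\gamma')$ instead of $(G,\gamma)$. Root $T$ and proceed breadth-first; at each non-root vertex, shift by the current label of the arc to its parent. This produces a $T$-normalized shifting $\gamma'''$ of $\gamma'$, and hence of $\gamma$, since shiftings compose. As noted in that proof, we only ever shift by elements of the current generated group. So $\langle G,\gamma'''\rangle$ is a subgroup of $\langle G,\gamma'\rangle\cong\mathbb{Z}_2^k$.

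This step needs one small check: shifting by an element of the generated group keeps all labels inside that group. That is immediate, because the new labels are products of old labels and the shifting elements. Finiteness of $G$ is not needed here, only closure.

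\textbf{Step 2.} A subgroup of $\mathbb{Z}_2^k$ is an $\mathbb{F}_2$-subspace. It is therefore isomorphic to $\mathbb{Z}_2^{r}$ for some $r\le k$. Hence $\langle G,\gamma'''\rangle\cong\mathbb{Z}_2^r$.

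\textbf{Step 3.} Now let $\gamma''$ be any $T$-normalized shifting of $\gamma$. Both $\gamma''$ and $\gamma'''$ are $T$-normalized shiftings of $\gamma$, so Lemma~\ref{lem:normalizedShifting1} gives a group element $\alpha$ with $\gamma''(\vec e)=\alpha\gamma'''(\vec e)\alpha^{-1}$ for every edge $e$. Conjugation by $\alpha$ is an automorphism of the ambient group. It maps the generators of $\langle G,\gamma'''\rangle$ onto the generators of $\langle G,\gamma''\rangle$, so $\langle G,\gamma''\rangle=\alpha\langle G,\gamma'''\rangle\alpha^{-1}\cong\mathbb{Z}_2^r$, as required.

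No step is a real obstacle. The only subtle point is Step 1: we must normalize starting from $\gamma'$ rather than from $\gamma$, so that the subgroup containment lands inside $\mathbb{Z}_2^k$.
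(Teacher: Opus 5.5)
Your proposal is correct and follows essentially the same route as the paper: apply Lemma~\ref{lem:normalizedShifting2} to $\gamma'$ to get a $T$-normalized shifting whose group is a subgroup of $\mathbb{Z}_2^k$, hence $\cong\mathbb{Z}_2^r$ with $r\le k$. Then transfer this to an arbitrary $T$-normalized $\gamma''$ via the conjugation given by Lemma~\ref{lem:normalizedShifting1}.
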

\begin{proof}
    Suppose that there is a shifting $\gamma'$ of $\gamma$ such that $\langle G, \gamma' \rangle \cong \mathbb{Z}_2^k$ for some $k\in \mathbb{N}$. By Lemma~\ref{lem:normalizedShifting2}, there is a $T$-normalized shifting $\gamma''$ of $\gamma'$ (so also of $\gamma$) such that $\langle G, \gamma'' \rangle$ is a subgroup of $\langle G, \gamma' \rangle$, which implies that $\langle G, \gamma'' \rangle \cong \mathbb{Z}_2^r$ for some $r \in \mathbb{N}$ with $r \le k$. By Lemma~\ref{lem:normalizedShifting1}, for any $T$-normalized shifting $\gamma'''$ of $\gamma$, we have $\langle G, \gamma''' \rangle \cong \langle G, \gamma'' \rangle \cong \mathbb{Z}_2^r$.
\end{proof}

Lemma~\ref{lem:normalizedShifting3} and Proposition~\ref{prop:3edgeConn} immediately give the following simple algorithm for 3-edge-connected graphs with an Eulerian trail from $a$ to $b$. By Lemma~\ref{lem:abelian}, we can also drop the assumption about $3$-edge-connectivity when the group is abelian.


\begin{proposition}
\label{prop:algorithm3-edge-conn}
    Let $(G, \gamma)$ be a $3$-edge-connected group-labeled graph with $m$ edges and total word length $\ell$. Let $a$ and $b$ be vertices of $G$ such that there exists an Eulerian trail from $a$ to $b$. Then it can be decided in $\mathcal{O}(\ell m)\cdot \phi(12\ell)$ time whether every Eulerian trail from $a$ to $b$ has the same label. Moreover, in the affirmative case, the algorithm finds a shifting $\gamma'$ of $\gamma$ of difference at most $\ell$ from $\gamma$ such that $\langle G, \gamma'\rangle$ is isomorphic to $\mathbb{Z}_2^k$ for some $k\in \mathbb{N}$.
\end{proposition}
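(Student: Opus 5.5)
The plan is to combine Proposition~\ref{prop:3edgeConn} with Lemma~\ref{lem:normalizedShifting3}. Since $G$ is $3$-edge-connected and has an Eulerian trail from $a$ to $b$, the triple $(G,a,b)$ is a valid instance. Proposition~\ref{prop:3edgeConn} and the remark preceding it then say that every Eulerian trail from $a$ to $b$ has the same label if and only if some shifting $\gamma'$ of $\gamma$ has $\langle G,\gamma'\rangle\cong\mathbb{Z}_2^k$. Here the forward direction uses the observation that a feasible circuit $L$ with $\gamma(L)\neq\gamma(L^{-1})$ yields two Eulerian trails with different labels. By Lemma~\ref{lem:normalizedShifting3}, if such a shifting exists, then every $T$-normalized shifting has this property. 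Conversely, a normalized shifting with this property is itself a witness. So it suffices to compute one $T$-normalized shifting and test whether the group it generates is an elementary abelian $2$-group.

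The algorithm proceeds in three steps. First, compute a spanning tree $T$ by breadth-first search in $\mathcal{O}(m)$ time. Second, apply Lemma~\ref{lem:normalizedShifting2} to obtain a $T$-normalized shifting $\gamma''$ of difference at most $\ell$ in time $\mathcal{O}(\ell m)$. Each new label $\gamma''(\vec{e})=\alpha_u\gamma(\vec{e})\alpha_v^{-1}$ is a word of length at most $2\ell+\ell_e\leq 3\ell$. Third, let $S$ be the set of labels $\gamma''(\vec{e})$ for the at most $m$ non-tree edges; tree edges have label $1$. The group $\langle G,\gamma''\rangle$ is generated by $S$, and a group generated by elements is isomorphic to $\mathbb{Z}_2^k$ exactly when every generator has order at most $2$ and every pair of generators commutes. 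This holds because such a group is abelian and has exponent $2$, and it is finitely generated.

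Testing this naively requires $\mathcal{O}(m^2)$ commutator checks, which exceeds the claimed bound. This is the main obstacle, and the plan is to avoid it by using a fixed reference edge. Once each generator $s$ satisfies $s^2=1$ (one oracle call on a word of length at most $6\ell$ per edge), commutation of $s$ and $t$ is equivalent to $(st)^2=1$. To cut the number of pairs down to $\mathcal{O}(m)$, I would use the structure from the proof of Proposition~\ref{prop:3edgeConn}, in which the group is built one edge at a time along the reduction of Lemma~\ref{lem:splittingMinCut}. There, the only checks needed were that each label $\beta\alpha$ of a new feasible circuit has order $2$, while the remaining generators were already known to commute.

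If that reduction turns out to be hard to implement within the stated time, I would fall back on the following, which I expect is the intended argument. Order the non-tree edges $e_1,\dots,e_r$. For each $i$, check with one oracle call that $s_i^2=1$, and check with one further call that the commutator word $s_is_js_is_j$ (length at most $12\ell$) is trivial for all pairs, batched as follows. Since all $s_j$ with $j<i$ already commute pairwise and have order $2$, $s_i$ commutes with all of them precisely when conjugation by $s_i$ fixes each $s_j$. I would try to certify this with $\mathcal{O}(1)$ calls per edge, for instance by concatenating the words $s_is_js_is_j$. If no sound batching works, the honest fallback is $\mathcal{O}(m^2)$ pairwise calls, each of length at most $12\ell$; this appears to match the factor $\phi(12\ell)$ and suggests the intended count is one call per pair weighted into the $\mathcal{O}(\ell m)$ term. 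In the affirmative case, $\gamma''$ is output as the required shifting of difference at most $\ell$.
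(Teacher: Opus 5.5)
Your algorithm and correctness argument coincide with the paper's. You compute a $T$-normalized shifting via Lemma~\ref{lem:normalizedShifting2}, test that every label $\gamma''(\vec{e})$ has order at most $2$ and that every pair of labels commutes, and conclude via Proposition~\ref{prop:3edgeConn} and Lemma~\ref{lem:normalizedShifting3}. Your bounds of $3\ell$ on each normalized label and $12\ell$ on each commutator word also match.

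The gap is the running time. You declare that the $\mathcal{O}(m^2)$ pairwise checks exceed the claimed bound, and you never resolve this:
\begin{itemize}
\item The reduction through the proof of Proposition~\ref{prop:3edgeConn} is left undeveloped.
\item The batching by concatenating the words $s_is_js_is_j$ is unsound. In the dihedral group of order $8$, let $s_1,s_2$ be the reflections in the two diagonals and $s_3$ the reflection in the $x$-axis. Then $s_1,s_2$ commute and all three have order $2$. Moreover $(s_3s_1)^2=(s_3s_2)^2$ is the rotation by $\pi$, so the concatenated word is trivial although $s_3$ commutes with neither $s_1$ nor $s_2$.
\item The final fallback is only asserted to ``appear to match.''
\end{itemize}

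The missing step is one line, and it is exactly what the paper uses: every edge contributes at least one letter to the total word length, so $\ell\geq m$ and hence $m^2\leq \ell m$. There are $\mathcal{O}(m)$ order tests and $\mathcal{O}(m^2)$ commutator tests, each a single oracle call on a word of length at most $12\ell$. They cost $\mathcal{O}(m^2)\cdot\phi(12\ell)\subseteq\mathcal{O}(\ell m)\cdot\phi(12\ell)$, on top of the $\mathcal{O}(\ell m)$ normalization. No batching or structural reduction is needed; with this observation your fallback is a complete proof.
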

\begin{proof}
    Using Lemma~\ref{lem:normalizedShifting2}, we can in $\mathcal{O}(\ell m)$ time find a spanning tree $T$ of $G$ and a $T$-normalized shifting $\gamma'$ of $\gamma$ of difference at most $\ell$ from $\gamma$. If $\langle G, \gamma'\rangle$ is isomorphic to $\mathbb{Z}_2^k$ for some $k\in \mathbb{N}$, then output ``yes'' together with the witness $\gamma'$. Otherwise, output ``no''. This can be checked in $\mathcal{O}(\ell m)\cdot \phi(12\ell)$ time by checking whether each element of $\{\gamma'(\vec{e}) : e \in E(G)\}$ has order 2 and commutes with every other element. (Note that $\ell \geq m$ and that the word length of $\gamma'(\vec{e})$ is at most $3\ell$ for each edge $e$. Moreover, we can determine whether elements $\alpha$ and $\beta$ commuting by checking whether $\alpha\beta \alpha^{-1}\beta^{-1}$ is the identity.) The correctness of this algorithm follows from Proposition~\ref{prop:3edgeConn} and Lemma~\ref{lem:normalizedShifting3}.
\end{proof}

We now give the full theorem for general group-labeled graphs using Theorem~\ref{thm:fullTechnical}.

\begin{theorem}
\label{thm:algorithm-general}
    Let $(G,\gamma)$ be a connected group-labeled graph with $m$ edges and total word length $\ell$. Then it can be decided in $\mathcal{O}(\ell m)\cdot \phi(12\ell)$ time whether every Eulerian trail from a vertex $a$ to a vertex $b$ has the same label. Moreover, in the affirmative case, the algorithm finds a shifting $\gamma'$ of $\gamma$ of difference at most $\ell$ such that Condition~\ref{itm:cond2} of Theorem~\ref{thm:fullTechnical} holds.
\end{theorem}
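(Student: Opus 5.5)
The plan is to reduce to Proposition~\ref{prop:algorithm3-edge-conn} applied separately to each core, using the decomposition of Theorem~\ref{thm:fullTechnical}. The algorithm proceeds in the following steps.

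First, we check the parity and connectivity conditions for an Eulerian trail from $a$ to $b$. If they fail, the statement is vacuous. Otherwise we compute one Eulerian trail $C$ from $a$ to $b$ by Hierholzer's algorithm in $\mathcal{O}(m)$ time.

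Second, we compute the cores of $G$. One option is a Gomory--Hu tree, thresholded at capacity $3$. A simpler option, adequate for the stated bound, finds all bridges and all $2$-edge-cuts in $\mathcal{O}(m^2)$ time, by deleting each edge in turn and computing bridges of the rest. The cores are then the classes of vertices not separated by any cut of size at most $2$. Since $\ell\ge m$, this fits within $\mathcal{O}(\ell m)$.

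Third, for each core $X$ we build the valid instance $(H_X,a'_X,b'_X)$ and its labeling $\gamma_{H_X}$ as in the definition preceding Lemma~\ref{lem:isValid}. To do this we walk along $C$ once. Each maximal excursion of $C$ outside $X$ that starts and ends in $X$ becomes a new edge of $H_X$, labeled by the concatenated word of that excursion.

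Fourth, we run Proposition~\ref{prop:algorithm3-edge-conn} on each $(H_X,\gamma_{H_X})$, which is valid by Lemma~\ref{lem:isValid}. We answer yes if and only if every core passes. Correctness follows from Theorem~\ref{thm:fullTechnical}, with one subtlety handled in the next step: Condition~\ref{itm:cond2} asks for a single shifting $\gamma'$ of $\gamma$ on all of $G$, not independent shiftings per core.

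Fifth, we assemble that global shifting. The key fact, noted in the proof of Theorem~\ref{thm:fullTechnical}, is that shifting at vertices of a core $Y$ affects only the instance obtained from $Y$. This holds because a virtual edge's label is the label of a subtrail whose internal vertices lie outside the core, so shifts at those internal vertices telescope away. Hence we take the shift $\alpha_v$ produced for $H_X$ at each $v\in X$, and these choices combine into one shifting $\gamma'$ of $G$.

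The main obstacle is the bookkeeping of word lengths and running time. A virtual edge's word is the label of a subtrail of $C$. The excursions for a fixed core are edge-disjoint, so the total word length $\ell_X$ of $H_X$ is at most $\ell$. Moreover, an edge of $G$ lies inside $H_X$ for only one core $X$. As I understand it, an edge outside $X$ contributes to virtual edges for several cores, so $\sum_X \ell_X$ can exceed $\ell$. To control this, I would first bound $\sum_X |E(H_X)|$ by $\mathcal{O}(m)$: each virtual edge corresponds to a cut of size at most $2$ at $X$, and the tree structure of cores gives a linear count.

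The cost for a single core is $\mathcal{O}(\ell_X |E(H_X)|)\cdot\phi(12\ell_X)$. Using $\ell_X\le \ell$ and $\phi$ nondecreasing, the total is $\mathcal{O}\bigl(\ell\sum_X |E(H_X)|\bigr)\cdot\phi(12\ell)=\mathcal{O}(\ell m)\cdot\phi(12\ell)$. The difference of each per-core shifting is at most $\ell_X\le\ell$, so the assembled $\gamma'$ has difference at most $\ell$.

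If the count of virtual edges turns out to be delicate, the fallback is to follow the inductive peeling in the proof of Theorem~\ref{thm:fullTechnical}. We would remove one leaf core at a time, replacing it by a single edge. The problem is that such an edge's word may grow, and I would need to check it stays within $\ell$ in total.
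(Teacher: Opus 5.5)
Your proposal is correct and follows essentially the same route as the paper. Both arguments find one Eulerian trail $C$, compute the cores, build each $(H_X,a'_X,b'_X)$ and $\gamma_{H_X}$ from $C$, run Proposition~\ref{prop:algorithm3-edge-conn} on each core, and assemble the per-core shiftings, with the running time controlled by $\ell_X\le\ell$ and $\sum_X|E(H_X)|\le m$.

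The differences are minor. The paper computes the cores in linear time by citing Nagamochi--Ibaraki; your $\mathcal{O}(m^2)$ method also fits, since $\ell\ge m$. The edge count needs neither a tree structure nor your fallback: each virtual edge of $H_X$ uses two distinct edges of $\delta_G(X)$, and each edge of $G$ lies in $\delta_G(X)$ for at most two cores $X$.
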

\begin{proof}
    We first test whether there exists any Eulerian trail from $a$ to $b$. This can be done in $\mathcal{O}(m)$ time; it suffices to check whether every vertex besides $a$ and $b$ has even degree, and if $a$ and $b$ are different, whether they have odd degree. If no such trail exists then we return that every trail vacuously has the same label. Otherwise we find such a trail $C$, which can be done in $\mathcal{O}(m)$ time by Hierholzer's algorithm~\cite{hierholzer1873moglichkeit}.
    
    We then find a partition of $V(G)$ into cores. This can be done in $\mathcal{O}(m)$ time by~\cite{NagamochiIbarakiCore}. Then, for each core $X$, we obtain a valid instance $(H,a',b')$ from the instance $(G,a,b,C)$ and a labeling $\gamma_H$ from $\gamma$ as in Section~\ref{sec:general}. This can be done in $\mathcal{O}(\ell)$ time for any particular core, and as there are at most $n$ cores, this contributes $\mathcal{O}(\ell n)$ to the running time. Note that the total word length of $(H, \gamma_H)$ is also at most $\ell$. Then, for each core, we use the algorithm from Proposition~\ref{prop:algorithm3-edge-conn} to test in time $\mathcal{O}(\ell|E(H)|)\cdot \phi(12\ell)$ whether every Eulerian trail from $a'$ to $b'$ has the same label. If so, we also obtain a shifting $\gamma'_H$ of $\gamma_H$ of difference at most $\ell$ from $\gamma_H$ such that $\langle H, \gamma'_H\rangle \cong \mathbb{Z}_2^k$ for some~$k$. Note that the sum over all these different graphs $H$ of the number of edges of $H$ equals the number of edges $m$ of $G$. Thus this step can be completed in $\mathcal{O}(\ell m)\cdot \phi(12\ell)$ time.
    
    If every valid instance $(H,a',b')$ has such a shifting, then we output ``yes'', otherwise we output ``no''. The correctness of the algorithm follows from Theorem~\ref{thm:fullTechnical}. In the event that we output ``yes'', we obtain the shifting $\gamma'$ of $\gamma$ by shifting as in $\gamma'_H$ for each~$H$.
\end{proof}

Finally, we use Theorem~\ref{thm:algorithm-general} to find trails of different labels if they exist.

\begin{corollary}
\label{cor:findTrails}
    Let $(G,\gamma)$ be a connected group-labeled graph with $m$ edges and total word length $\ell$. Let $a$ and $b$ be vertices of $G$ such that there exist two Eulerian trails from $a$ to $b$ with different labels. Then we can find in $\mathcal{O}(\ell m^3) \cdot \phi(12\ell)$ time a circuit $L$ of $G$ with $\gamma(L) \neq \gamma(L)^{-1}$ and an Eulerian trail $T$ of $G$ from $a$ to $b$ which contains $L$ as a subcircuit.
\end{corollary}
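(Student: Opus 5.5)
We use the decision algorithm of Theorem~\ref{thm:algorithm-general} as a black box, together with a self-reducibility argument that fixes the Eulerian trail one arc at a time. The moreover part of Theorem~\ref{thm:fullTechnical} guarantees that the desired circuit $L$ exists. The difficulty is to locate it.

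First we locate the bad core. We run the procedure from the proof of Theorem~\ref{thm:algorithm-general}: find some Eulerian trail $C$ from $a$ to $b$, compute the cores, build each valid instance $(H,a',b')$ with labeling $\gamma_H$, and test each with Proposition~\ref{prop:algorithm3-edge-conn}. Since two Eulerian trails have different labels, some core $X$ fails. By the proof of Theorem~\ref{thm:fullTechnical}, any feasible circuit of $(H,a',b')$ whose label has order greater than $2$ lifts to the required circuit $L$ of $G$. We lift by replacing each virtual edge of $H$ with the subtrail of $C$ it came from, and by splicing the resulting circuit into an Eulerian trail of $G$ built from the remaining pieces of $C$. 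So it suffices to solve the $3$-edge-connected case within a time budget of $\mathcal{O}(\ell m^3)\cdot\phi(12\ell)$.

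Next we reduce in the $3$-edge-connected case by following Lemma~\ref{lem:splittingMinCut}. For the current valid instance $(H,a',b')$, we try each edge $e$ incident to $a'$ and check in polynomial time whether $(H_e,a'',b')$ is still a valid instance, i.e.\ $3$-edge-connected. This costs $\mathcal{O}(m)$ candidates, each with a connectivity test. We then run the decision algorithm on $(H_e,a'',b')$ with the induced labeling $\gamma_e$. If some valid $H_e$ is still ``bad'', we recurse on it. Any feasible bad circuit found there lifts to a feasible bad circuit of $H$: prepend $\vec e$ to the trail, and if a smoothed edge $e_{1,2}$ appears, re-expand it into $e_1e_2$. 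Otherwise, the proof of Proposition~\ref{prop:3edgeConn} shows explicitly where the bad circuit is. The instance $H_e$ admits a $\mathbb{Z}_2^k$ shifting $\gamma'_e$, and some circuit in $\mathcal{L}$ (feasible, starting with $\vec e$) has label $\beta\alpha$ of order greater than $2$. To find it, we enumerate candidate circuits $T_1L_1$ and $T_1L_2^{-1}$ as in Claim~\ref{clm:ecircuit}. These are obtained from feasible circuits of $H_e$ through the edge $f$, which are in turn produced by the same recursion. Alternatively, we greedily build an Eulerian trail starting $\vec e$, closing at $a'$ at each possible return, and we test the order of each closed prefix's label with the oracle, using one word of length at most $12\ell$ per test.

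The main obstacle is this last step: turning the existential argument of Claim~\ref{clm:ecircuit} into an efficient search. My first attempt would be the prefix approach. Fix the first arc $\vec e$ and extend the Eulerian trail arc by arc. At each step, we keep only those extensions for which the residual instance (the remaining graph, with the current prefix treated as fixed) still has two Eulerian completions of different labels. We detect this with the decision algorithm, after contracting the prefix into a single labeled edge attached at $a$. This gives $\mathcal{O}(m)$ steps, with $\mathcal{O}(m)$ choices per step, and each check costs $\mathcal{O}(\ell m)\cdot\phi(12\ell)$. The total matches the claimed bound $\mathcal{O}(\ell m^3)\cdot\phi(12\ell)$. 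When no extension preserves badness yet the instance is bad, the two differing completions must branch at the current vertex. The circuit formed between consecutive returns, identified via the \emph{Moreover} clause of Theorem~\ref{thm:fullTechnical} applied to the residual instance, then yields $L$ with $\gamma(L)\neq\gamma(L)^{-1}$ inside the trail $T$.
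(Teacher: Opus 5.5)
\textbf{There is a genuine gap at the step you flag as the main obstacle: the proposal never gives a procedure that actually outputs $L$.}

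The \emph{Moreover} clause of Theorem~\ref{thm:fullTechnical} is purely existential. Applying it to the residual instance tells you that a bad feasible circuit exists, but it does not identify one. The concrete candidate you offer, the circuit between consecutive returns to the current vertex, can fail. Take a single vertex $v=a=b$ with two loops $p,q$ labeled $\gamma(\vec{p})=(12)$ and $\gamma(\vec{q})=(23)$ in $\mathfrak{S}_3$.
\begin{itemize}
\item The Eulerian trails $\vec{p}^{\pm 1}\vec{q}^{\pm 1}$ have label $(12)(23)$, and the trails $\vec{q}^{\pm 1}\vec{p}^{\pm 1}$ have label $(23)(12)\neq(12)(23)$. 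So the instance is bad.
\item Both loop labels are involutions, so the first arc already determines the label. No choice of first arc $\vec e$ keeps two labels, and your prefix search is stuck before it starts.
\item Every circuit between consecutive returns to $v$ is a single loop, whose label equals its own inverse.
\item The required $L$ is $\vec{p}\vec{q}$, whose label is a $3$-cycle.
\end{itemize}
In general, at a stuck vertex you would still need a separate search for a completion that leaves $v$ by some arc $\vec f$ and re-enters by some $\vec g^{-1}$, where completions starting with $\vec f$ and with $\vec g$ have different labels. You would also need to fit that search in the time bound; none of this is in the proposal.

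Your first route, via Lemma~\ref{lem:splittingMinCut}, has the same defect, and this example triggers it: deleting either loop leaves a good instance. When $H_e$ is good but $H$ is bad, Claim~\ref{clm:ecircuit} only says that \emph{some} member of the possibly exponentially large family $\mathcal{L}$ is bad. Recursing on $H_e$ produces nothing, because $H_e$ has no bad feasible circuit.

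There is also a smaller issue. Contracting the prefix into an edge attached at $a$ does not force Eulerian trails of the new graph to begin with that edge when $a$ has other incident edges. You should attach the contracted edge to a fresh vertex of degree one instead.

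\textbf{How the paper avoids this.} Contracting your prefix is just splitting off along it; the paper instead allows splitting off \emph{any} consecutive pair $\vec e\vec f$. It does so whenever Theorem~\ref{thm:algorithm-general} still reports two labels, storing for each new arc the trail it represents. Forbidden lists ensure only $\mathcal{O}(m^2)$ pairs are ever tested. At termination, the Moreover clause gives an Eulerian trail $T_1LT_2$ with $\gamma(L)\neq\gamma(L)^{-1}$. Any two consecutive arcs inside $T_1$, inside $T_2$, or inside $L$ could still be split off while keeping this property. Hence at most three edges remain, $L$ is a loop, and $T_1,L,T_2$ are read off from the stored trails.

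The Moreover clause is thus used only to certify that the terminal graph is tiny, never to locate $L$. In the example above, one split of $\vec{p}\vec{q}$ finishes the job. Your prefix method can be repaired the same way: once stuck, keep splitting off arbitrary consecutive pairs in the residual graph rather than only extending the prefix.
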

\begin{proof}
    By Theorem~\ref{thm:fullTechnical}, such a circuit $L$ exists if and only if there exist two Eulerian trails from $a$ to $b$ with different labels. Using this equivalence, we proceed as follows.

    While there exists a pair of arcs $\vec{e}$ and $\vec{f}$ such that $\vec{e} \vec{f}$ is a trail and splitting off $\vec{e}$ and $\vec{f}$ does not change whether such an $L$ exists, we split them off and label the new arc by $\gamma(\vec{e})\gamma(\vec{f})$. We can test for the existence of such an $L$ by Theorem~\ref{thm:algorithm-general}. For each arc in the graph we store a trail, starting with the trail consisting of only that arc. Each time we split off a pair of arcs, we append their corresponding trails in the correct order, and combine their labels. In this way, every arc of the resulting graph corresponds to a trail in the original graph of the same label. Arcs $\vec{e}$ and $\vec{e}^{-1}$ correspond to a trail $T$ and its inverse~$T^{-1}$.

    Note that if a pair of arcs cannot be split off in one stage of the algorithm, then they also cannot be split off in any later stage of the algorithm. To be more precise, if we determine at one stage of the algorithm that arcs $\vec{e}$ and $\vec{f}$ cannot be split off, then at any later point in the algorithm, if $\vec{e}_1$ is an arc whose corresponding trail ends with $\vec{e}$, and $\vec{f}_1$ is an arc whose corresponding trail begins with $\vec{f}$, then $\vec{e}_1$ and $\vec{f}_1$ cannot be split off. So, to speed up the algorithm, each arc $\vec{e}$ stores a list of ``forbidden'' arcs $F(\vec{e})$ so that we know $\vec{e}$ cannot be split off with any arc in $F(\vec{e})$. We update these lists every time we test a pair of arcs.

    Each application of Theorem~\ref{thm:algorithm-general} takes $\mathcal{O}(\ell m) \cdot \phi(12\ell)$ time. Updating the lists $F(\vec{e})$ can (naively) be done in $\mathcal{O}(\ell m)$ time since $\ell \geq m$. So, since throughout the course of the algorithm we only check $\mathcal{O}(m^2)$ pairs of arcs, the total runtime is $\mathcal{O}(\ell m^3) \cdot \phi(12\ell)$. 

    Once the algorithm determines that no more arcs can be split off, we claim that there are at most three edges remaining. To see this, note that there exists an Eulerian trail $T_1LT_2$ from $a$ to $b$ such that $L$ is a circuit with $\gamma(L) \neq \gamma(L)^{-1}$. If $L$ is not a loop, then a pair of arcs internal to $L$ can be split off. Similarly if $T_1$ or $T_2$ contains at least two arcs, then a pair of arcs can be split off. Thus the resulting graph has at most three edges and $L$ is a loop. We then recover $T_1, L, T_2$ in the original graph by reading off the trails corresponding to $T_1,L,T_2$ in the small graph.
\end{proof}

Together, Theorem~\ref{thm:algorithm-general} and Corollary~\ref{cor:findTrails} immediately imply Theorem~\ref{thm:algorithmicIntro} from the introduction.

\bibliographystyle{abbrv}
\bibliography{eulerian}

\end{document}